\definecolor{Caca}{RGB}{193,124,250}
\newtheorem{theorem}{Theorem}[section]
\newtheorem{proposition}[theorem]{Proposition}
\newtheorem{corollary}[theorem]{Corollary}
\theoremstyle{definition}
\theoremstyle{remark}
\newtheorem{remark}[theorem]{Remark}
\newcommand{\de}{\delta}
\newcommand{\ep}{\varepsilon}
\newcommand{\si}{\sigma}
\newcommand{\vp}{\varphi}
\newcommand{\De}{\Delta}
\newcommand{\La}{\cD}
\newcommand{\Si}{\Sigma}
\newcommand{\Om}{\Omega}
\def\RR{\mathbb{R}}
\newcommand{\cD}{{\mathcal D}}
\newcommand{\cE}{{\mathcal E}}
\newcommand{\cH}{{\mathcal H}}
\newcommand{\cM}{{\mathcal M}}
\newcommand{\cN}{{\#}}
\newcommand{\cV}{{\mathcal V}}
\newcommand{\pd}{\partial}
\newcommand\minus\backslash
\newcommand\lan\langle
\newcommand\ran\rangle
\renewcommand\leq\leqslant
\renewcommand\geq\geqslant
\newlength{\intwidth}
\numberwithin{equation}{section}
\newcommand\BSi\Si 
\newcommand\oSi{{\Sigma}}
\newcommand\ocM{{\cM}}
\newcommand\pdV{\pd_{\mathrm{V}}\Omega}
\newcommand\pdVc{\pd_{\mathrm{V}}\Omega^{\mathrm{c}}}
\newcommand\pdH{\pd_{\mathrm{L}}\Omega}
\newcommand\co{^{\mathrm{c}}}
\newcommand{\icM}{\dot\cM}
\begin{document}

\title[Nonexistence of Courant-type nodal domain bounds]{Nonexistence of Courant-type\\ nodal domain bounds for eigenfunctions\\ of the Dirichlet-to-Neumann operator}

 \author{Alberto Enciso}
 \address{Instituto de Ciencias Matem\'aticas, Consejo Superior de
   Investigaciones Cient\'\i ficas, 28049 Madrid, Spain, e-mail: {\sf aenciso@icmat.es}.}

\author{Angela Pistoia}
\address{Dipartimento di Scienze di Base e Applicate per l'Ingegneria, Universit\`a di Roma ``La Sapienza'',  00161 Roma, Italy, e-mail: {\sf angela.pistoia@uniroma1.it}.}

\author{Luigi Provenzano}
\address{Dipartimento di Scienze di Base e Applicate per l'Ingegneria, Universit\`a di Roma ``La Sapienza'',  00161 Roma, Italy, e-mail: {\sf luigi.provenzano@uniroma1.it}.}

%
%
\begin{abstract}
Given a compact manifold~$\cM$ with boundary of dimension $n\geq 3$ and any integers~$K$ and~$N$, we show that there exists a metric on~$\cM$ for which the first~$K$ nonconstant eigenfunctions of the Dirichlet-to-Neumann map on~$\pd\cM$ have at least~$N$ nodal components. This provides a negative answer to the question of whether the number of nodal domains of Dirichlet-to-Neumann eigenfunctions satisfies a Courant-type bound, which has been featured in  recent surveys  by Girouard and Polterovich~\cite[Open problem 9]{Pol17} and by Colbois, Girouard, Gordon and Sher~\cite[Open question 10.14]{GirouSurv}. 
\end{abstract}

\maketitle

\section{Introduction}

Let $\ocM$ be a compact $n$-dimensional manifold with boundary, endowed with a smooth Riemannian metric~$g$. The Dirichlet-to-Neumann (DtN) map of this Riemannian manifold is the linear operator $\cD: H^{\frac12}(\pd\cM)\to H^{-\frac12}(\pd\cM)$ 
 defined by $\La\vp:=\pd_\nu u$, where $u$ is the harmonic extension of~$\vp$, that is, the only solution to the boundary value problem
 \[
 \De u=0\quad\text{in }\icM\,,\qquad u=\vp\quad\text{on }\pd\cM\,.
 \]
 Here the Laplacian and the normal derivative are defined using the metric~$g$, $\icM$~denotes the interior of~$\ocM$, and $\nu$ is the outer unit normal to $\pd\cM$.
 
It is well known that~$\cD$ is a nonlocal pseudodifferential elliptic operator on~$\pd\cM$ of order~1, which defines a non-negative self-adjoint operator with dense domain in $L^2(\pd\cM)$. One can therefore take an orthonormal basis of $L^2(\pd\cM)$ consisting of eigenfunctions $\{\vp_k\}_{k=1}^\infty\subset  C^\infty(\pd\cM)$ of the DtN map, which satisfy the equation
\[
\cD\vp_k=\si_k\vp_k \quad\text{on }\pd\cM\,.
\]

The sequence of non-decreasing reals
\[
0=\si_0<\si_1\leq \si_2\leq \cdots\,,
\]
which tends to infinity as $k\to\infty$,
consists of the {\em Steklov eigenvalues}\/ of the manifold $(\cM,g)$. The harmonic extensions $u_k$ of the DtN eigenfunctions~$\vp_k$ satisfy
\begin{equation}\label{steklov}
\begin{cases}
\De u_k=0 & {\rm in\ }\icM\,,\\
\pd_\nu u_k=\si_k u_k & {\rm on\ }\pd\cM\,,
\end{cases}
\end{equation}
and are known as the {\em Steklov eigenfunctions}\/ of the Riemannian manifold $(\ocM,g)$. We refer to Problem \eqref{steklov} as the Steklov spectral problem. One can choose Steklov eigenfunctions $\{u_k\}_{k=1}^{\infty}\subset C^{\infty}(\cM)$ to form an orthonormal basis of the subspace of harmonic functions in $H^1(\cM)$. As is well known, $u_0$ is constant (and so is $\vp_0$).

In this note we are interested in the {\it geometry of  Steklov and DtN eigenfunctions}, specifically in the geometric properties of their nodal sets. The study of nodal sets of eigenfunctions is probably the oldest topic in spectral geometry, and can be traced back to Chladni's experiments with vibrating plates. The central result is Courant's nodal domain theorem, which asserts that the $k$-th Dirichlet eigenfunction of the Laplacian on a compact manifold with boundary~$\ocM$ has at most $k$ nodal domains (see \cite{Courant1923} or \cite[VI.6]{CoHil}). 

Since the proof of this landmark result only uses the min-max formulation of the  problem and unique continuation, it applies essentially verbatim~\cite{KS} to the case of Steklov eigenfunctions, yielding the nodal domain bound\footnote{Here and in what follows, we denote by $\cN(u_k)$ (respectively, $\cN(\vp_k)$) the number of nodal domains of the Steklov eigenfunction $u_k$ in~${\cM}$ (respectively, of the DtN eigenfunction~$\vp_k$ in~$\pd\cM$).}  $\cN(u_k)\leq k$. 

However, as the DtN map is nonlocal, the proof does not work in the case of DtN eigenfunctions. In dimension $n=2$, one can use a topological argument to show that the number of ``boundary nodal domains'' $\cN(\vp_k)$ can be bounded in terms of the number of ``bulk nodal domains'' $\cN(u_k)$ and of the topology of the manifold~$\cM$. In the particular case where the surface $\cM$ is simply connected, this translates~ \cite{AM_Stekloff} into the bound $\cN(\vp_k)\leq 2k$. We refer to \cite{sher} and \cite{KKP} for an account of results in the case of Riemannian surfaces. In fact, in dimension $2$,  the number of boundary nodal domains turns out to be related to the number of interior critical points of Steklov eigenfunctions~\cite{BPP}.

The situation is completely different in higher dimensions, as the number of nodal domains of a function in the bulk~$\cM$ does no longer control the number of boundary nodal components. A beautiful visual illustration of this principle was provided by Girouard and Polterovich in~\cite[Figure~6]{Pol17}, a minor variation of which we present here as Figure~\ref{F.GP}.

\begin{figure}\label{F.GP}
\centering 
		\includegraphics[width=0.4\textwidth]{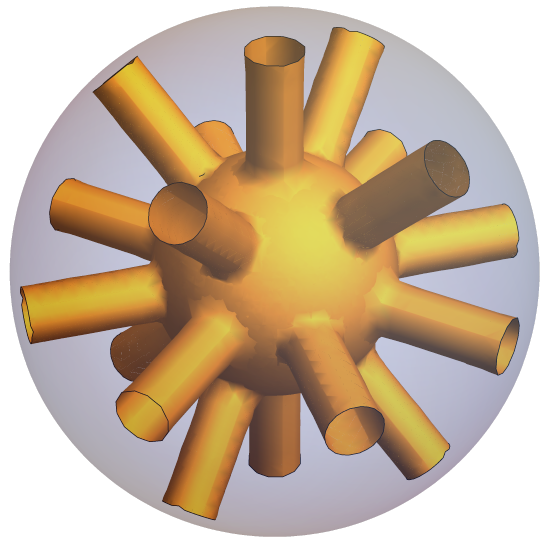}
	\caption{A surface inside a ball creating only two connected components in the interior and a large number of connected components on the boundary sphere, as illustrated in~\cite[Figure~6]{Pol17}.}
\end{figure}

In the last few years, the question of whether there is an analog of Courant's nodal domain bound for $\cN(\vp_k)$ has been posed in the authoritative surveys~\cite[Open problem 9]{Pol17} and~\cite[Open question 10.14]{GirouSurv}. Indeed, as argued in~\cite[Section~6]{Pol17}, there are indications that Courant's bound should hold for DtN eigenfunctions up to a universal constant depending on the dimension, i.e.,
\begin{equation}\label{conj}
\cN(\vp_k)\leq C_n(k+1)\,.
\end{equation}
This is what happens, for instance, in the case of Euclidean balls and cylinders.

Nevertheless, our objective in this note is to show that there are no universal nodal domain bounds for DtN eigenfunctions in dimension $n=3$ and higher. More precisely, one has the following:

\begin{theorem}\label{T.main}
Let $\ocM$ be a compact $n$-dimensional manifold with boundary, with $n\geq3$. Given any positive integer~$N$, there exists a smooth metric on~$\ocM$ such that the first nonzero eigenvalue $\si_1$ of the Dirichlet-to-Neumann map has multiplicity~$1$ and its corresponding eigenfunction~$\vp_1$ has exactly~$N$ nodal components. 

More generally, for any positive integers $N,K$ there exists a smooth metric on~$\ocM$ for which the first $K$ nonzero Steklov eigenvalues are simple and $\cN(\vp_k)\geq N$ for all $1\leq k\leq K$.
\end{theorem}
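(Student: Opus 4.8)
The plan is to build the desired metric by surgery, starting from a reference metric and then gluing in a carefully engineered ``collar'' near the boundary that forces the low Steklov eigenfunctions to have many nodal components on $\pd\cM$, while keeping the eigenvalues simple. The intuition is exactly the one encoded in Figure~\ref{F.GP}: we want a thin region adjacent to $\pd\cM$ through which the harmonic extension is essentially forced to reproduce, on the boundary, a pattern with many sign changes, even though the first eigenfunction is very ``spread out'' in the bulk. Concretely, I would decompose $\ocM$ as $\ocM = \ocM_0 \cup C$, where $C \cong \pd\cM \times [0,\ep]$ is a collar neighborhood of the boundary, and choose the metric on $C$ to be a warped product $g|_C = d t^2 + \rho(t)^2 h_0$ for a fixed metric $h_0$ on $\pd\cM$ and a profile function $\rho$ to be chosen, together with a metric on $\ocM_0$ that is glued smoothly. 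The free parameter is the profile $\rho$ (and the ``thickness'' $\ep$), and the geometry we want is a collar that is extremely thin in the $t$-direction relative to its cross-sectional size, so that harmonic functions on $C$ with bounded $H^1$ energy are, to leading order, $t$-independent; this is the mechanism that decouples the boundary behavior from the bulk.

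The key steps, in order, are as follows. First, I would recall the variational characterization $\si_k = \min_{\dim V = k+1} \max_{u \in V \setminus \{0\}} \frac{\int_\cM |\grad u|^2}{\int_{\pd\cM} u^2}$, the minimum taken over $(k+1)$-dimensional subspaces of $H^1(\cM)$, and the complementary fact that $\si_k \to \infty$. Second, I would take on $\pd\cM$ a fixed metric $h_0$ possessing a first nonconstant Laplace eigenfunction $\psi_1$ with at least $N$ nodal domains — this is easy to arrange on any closed $(n-1)$-manifold for $n-1 \geq 2$, e.g.\ by a direct construction or by a result of the type ``one can prescribe a metric whose $j$-th eigenfunction has many nodal domains'' (the $n \geq 3$ hypothesis is what makes $\pd\cM$ at least $2$-dimensional, hence flexible enough). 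Third, and this is the crux, I would show that as the collar is made thin (say $\rho \equiv 1$ but the $t$-length shrunk, or equivalently a conformal scaling that concentrates the cross-section), the Steklov spectrum of $(\ocM, g)$ converges — eigenvalue by eigenvalue, with convergence of eigenspaces — to the spectrum of a limiting problem on $\pd\cM$ which is governed by the Laplace--Beltrami operator of $h_0$ on the boundary. More precisely, in the thin-collar limit the Dirichlet-to-Neumann operator behaves like a multiple of $-\De_{h_0}$ (up to lower-order corrections from $\ocM_0$, which only affect the constant harmonic mode), so $\vp_k$ converges in $C^0$, or at least $L^2$ with enough regularity to control nodal domains, to the $k$-th Laplace eigenfunction of $(\pd\cM, h_0)$. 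In particular $\vp_k$ inherits at least $N$ nodal components for $\ep$ small. Fourth, I would upgrade ``at least $N$ nodal domains in the limit'' to ``at least $N$ nodal domains for $(\ocM,g)$ with $\ep$ small'' by a standard stability argument: a $C^0$-small (or $H^1$- plus elliptic-estimate-small) perturbation of a function that has $N$ nodal domains separated by a nondegenerate nodal set cannot have fewer nodal domains, since each region where the limit function is bounded away from $0$ persists. Finally, simplicity of $\si_1, \ldots, \si_K$: the limiting Laplace eigenvalues on $(\pd\cM, h_0)$ can be taken simple (generic metrics have simple spectrum, by Uhlenbeck), and simplicity is an open condition that survives the perturbation; if a direct genericity argument for the Steklov problem is cleaner, one perturbs the metric on $\ocM_0$ slightly to break any accidental degeneracy without disturbing the collar. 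For the ``more general'' statement with $K$ eigenfunctions, one simply requires the fixed boundary metric $h_0$ to have its first $K$ nonconstant eigenfunctions each with $\geq N$ nodal domains (again arrangeable for a $\geq 2$-dimensional closed manifold) and runs the same thin-collar limit, which controls the first $K$ Steklov pairs simultaneously.

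The main obstacle is the third step: establishing the thin-collar spectral convergence with enough regularity to conclude about nodal sets. The delicate points are (a) proving that in the limit the non-constant Steklov modes really are captured by $-\De_{h_0}$ on $\pd\cM$ — one must carefully track how the Dirichlet energy $\int_C |\grad u|^2$ splits into a tangential part $\int \rho^{n-3}|\grad_{h_0} u|^2\,dt$ and a normal part $\int \rho^{n-1}|\pd_t u|^2\,dt$, and show the normal part forces $t$-independence while the tangential part yields the Rayleigh quotient of $-\De_{h_0}$; the correct scaling of $\rho$ versus $\ep$ has to be pinned down so that this balance works and the contribution of the bulk $\ocM_0$ decouples except in the constant mode; and (b) getting convergence of eigenfunctions in a topology strong enough to preserve nodal-domain count — $L^2$ convergence alone is not enough, so one needs uniform elliptic estimates on $\cM$, ideally $C^1$ up to the boundary, which in turn requires uniform control of the (degenerating) metric; this is typically handled by working on a fixed background and treating the thin collar as a singular conformal factor, or by the min-max/$\Gamma$-convergence machinery for Steklov problems on shrinking domains. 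Once the spectral convergence is in place with eigenfunction control, the nodal-stability and simplicity arguments are routine.
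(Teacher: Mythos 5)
There is a fatal gap at the second step of your plan. You propose to pick a metric $h_0$ on the closed manifold $\pd\cM$ whose \emph{first nonconstant} Laplace eigenfunction has at least $N$ nodal domains (``easy to arrange''), and then to make $\vp_k$ converge to the $k$-th Laplace eigenfunction of $(\pd\cM,h_0)$ in a thin-collar limit. But Courant's nodal domain theorem holds on closed manifolds: for \emph{every} metric on $\pd\cM$ the first nonconstant eigenfunction has exactly two nodal domains, and the $k$-th eigenfunction has at most $k+1$. So the metric $h_0$ you need does not exist as soon as $N\geq 3$, and for the general statement as soon as $N>K+1$. The problem is structural, not technical: any scheme in which the low DtN eigenfunctions converge, with enough control to count nodal domains, to Laplace eigenfunctions of the boundary necessarily reinstates the Courant bound in the limit — this is precisely the asymptotic regime, discussed in the introduction, in which the conjectured bound \eqref{conj} \emph{would} hold — so it cannot produce the counterexample the theorem asserts. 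There is also an internal inconsistency in your decoupling heuristic: if harmonic functions are forced to be essentially $t$-independent across the collar, then the boundary trace agrees with the trace at the interface with $\ocM_0$, so the DtN map is essentially that of the bulk; the boundary is thereby coupled to, not decoupled from, $\ocM_0$. A genuine decoupling requires degenerating the metric in the bulk (e.g.\ conformally shrinking it), which in the limit produces a Neumann condition at the interface — but then, for a collar over all of $\pd\cM$, the limiting boundary eigenfunctions are again Laplace eigenfunctions of $(\pd\cM,h_0)$ and Courant blocks you as above.

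The paper's construction circumvents this obstruction by generating the boundary nodal components from the \emph{bulk} rather than from a boundary operator: one fixes a single connected separating hypersurface $\Sigma\subset\ocM$ whose boundary $\pd\Sigma\subset\pd\cM$ has many components (as in Figure~\ref{F.GP}), puts a product metric $dt^2+\beta^2 g_\Sigma$ on a tubular neighborhood $\Om$ of $\Sigma$, and conformally shrinks the metric by $\ep^2$ outside $\Om$. The limit problem is then a sloshing (mixed Steklov--Neumann) problem on $\Om$, whose first eigenfunctions, for $\beta$ small, vanish exactly on copies of $\Sigma$; Thom's isotopy theorem and a smoothing step transfer this to genuine Steklov eigenfunctions of a smooth metric. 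The boundary nodal set of $\vp_k$ is then essentially $\pd\Sigma$, which cuts $\pd\cM$ into $N$ pieces even though $u_1$ has only two nodal domains in $\cM$ — no Courant-type constraint on a closed manifold is ever invoked against you. If you want to salvage your write-up, you would have to replace the collar over $\pd\cM$ by a neighborhood of such a hypersurface $\Sigma$ and redo the limiting analysis for the resulting mixed problem; at that point you have reproduced the paper's argument rather than an alternative one.
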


Roughly speaking, we obtain this result by showing that one can find a metric on~$\ocM$ for which the first~$K$ nonconstant Steklov eigenfunctions~$u_k$ have a level set that looks essentially like the hypersurface~$\Sigma$ depicted in Figure~\ref{F.GP}, with an arbitrary number~$N$ of boundary components. Thus Theorem~\ref{T.main} is an immediate consequence about a more general result on the nodal set of Steklov eigenfunctions:

\begin{theorem}\label{T.Steklov}
	Let $\ocM$ be a compact $n$-dimensional manifold with boundary, with $n\geq3$, and let $\oSi\subset\ocM$ be a compact connected separating hypersurface with boundary~$\pd\Sigma\subset\pd\cM$. We assume that $\Si$ intersects~$\pd\cM$ transversally. For any  positive integer $K$, there exists a metric on~$\ocM$ such that the first $K$ nonzero Steklov eigenvalues are simple and, for all $1\leq k\leq K$, $u_k$ has a nodal component isotopic to~$\Sigma$.
\end{theorem}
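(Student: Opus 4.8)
The plan is to build the metric by a gluing/perturbation argument, starting from a well-chosen initial metric and exploiting the stability of simple eigenvalues. Since the statement is entirely local-to-global, the natural strategy is: first understand a model in which $\Sigma$ is genuinely a nodal set of a harmonic function with the right Steklov boundary behaviour, and then transplant this model into $(\ocM,g)$ by a deformation that affects only a small neighbourhood of $\Sigma$, controlling the effect on the first $K$ eigenpairs.

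\emph{Step 1: the model piece.} Fix a tubular neighbourhood $U$ of $\Sigma$ in $\ocM$, diffeomorphic to $\Sigma\times(-1,1)$ (with $\pd\Sigma$ intersecting $\pd\cM$ transversally, so $U$ meets $\pd\cM$ in $\pd\Sigma\times(-1,1)$). I would construct on $U$ a metric of a warped-product or collar type for which the coordinate function $t$ (the $(-1,1)$-factor) is, up to lower-order corrections, harmonic and satisfies an approximate Steklov condition on $\pd\Sigma\times(-1,1)$. The key point is that one has a lot of freedom: by choosing the warping profile one can make the ``conductivity'' of $U$ in the normal direction as small as one likes, so that $U$ becomes an almost-insulating layer. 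In that regime a Steklov eigenfunction on the full manifold is forced to be nearly locally constant on each of the two pieces $\ocM\setminus\Sigma$, and the thin transition layer across $\Sigma$ produces a level set isotopic to~$\Sigma$.

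\emph{Step 2: the two-piece limit and eigenvalue bookkeeping.} Let $\ocM_+$, $\ocM_-$ be the two components of $\ocM\setminus\Sigma$, with $\pd\cM_\pm$ a union of a piece of $\pd\cM$ and a copy of $\Sigma$. I would analyze the Steklov problem on $(\ocM,g_\ep)$ where $g_\ep$ is a family of metrics for which the layer $U$ degenerates (e.g.\ the normal conductivity $\to 0$) as $\ep\to 0$. In the limit, the quadratic form decouples and the spectrum converges to the union of the spectra of two mixed Steklov/Neumann problems on $\ocM_\pm$ (Steklov on the $\pd\cM$-part, Neumann on the $\Sigma$-part). By first arranging the \emph{outer} metric on $\ocM_\pm$ so that these two limiting problems have $K$ simple low eigenvalues that are moreover \emph{mutually distinct} across the two pieces, a standard spectral-stability argument (min-max plus norm-resolvent convergence, or Osborn-type perturbation estimates) shows that for $\ep$ small the first $K$ nonzero Steklov eigenvalues of $(\ocM,g_\ep)$ are simple, and each eigenfunction $u_k$ converges (in $H^1_{\mathrm{loc}}$ away from $\Sigma$) to a function that is a genuine nonconstant Neumann eigenfunction on one of the pieces and exponentially small on the other. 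Consequently $u_k$ takes values near two well-separated constants on the two sides of $U$, so a regular value between them has a connected component of its level set running through the collar $U$ and isotopic to $\Sigma$; unique continuation rules out further components of the nodal set crossing $U$, and a dimension count (the layer has codimension $0$ but the level set inside it is a graph over $\Sigma$) pins down the isotopy type.

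\emph{Main obstacle.} The delicate point is Step 2: making the degeneration quantitative enough to (a) guarantee \emph{simplicity} of the first $K$ eigenvalues on the glued manifold — which forces one to choose the outer metrics on $\ocM_+$ and $\ocM_-$ so that their relevant Neumann spectra interlace without coincidences, and to rule out spurious low eigenvalues localized in the thin layer itself — and (b) show the convergence is in a topology strong enough ($C^0$, via elliptic estimates from $H^1$ convergence and the equation) that the nodal set of $u_k$ is controlled, i.e.\ that no extra nodal components appear and that the component through the collar is exactly isotopic to $\Sigma$. Handling the layer's own ``Steklov/Neumann'' modes — proving they all sit at high frequency as $\ep\to0$ — is the crux; a Poincaré-type inequality on the warped collar with $\ep$-dependent constant, combined with transversality at $\pd\Sigma$, should do it. Once both points are in place, Theorem~\ref{T.main} follows by taking $\Sigma$ to be a hypersurface of the Girouard--Polterovich type with $N$ boundary components, as in Figure~\ref{F.GP}.
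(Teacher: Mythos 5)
Your construction degenerates the metric in the \emph{wrong region}, and this creates a gap that breaks the statement for $2\leq k\leq K$. If the collar $U$ is made almost insulating, the quadratic form decouples in the limit into the two mixed Steklov--Neumann problems on $\cM_\pm$, exactly as you say; but then, apart from the single ``splitting'' mode coming from the doubled zero eigenvalue (whose eigenfunction is close to two opposite constants and whose nodal set does lie in the collar), every other low eigenfunction converges to a \emph{nonconstant} mixed eigenfunction on one piece and to zero on the other. Your sentence ``consequently $u_k$ takes values near two well-separated constants on the two sides of $U$'' contradicts this: on one side of the collar $u_k$ is close to the (sign-changing, nonconstant) trace of the limit eigenfunction, and on the other side and inside the collar it converges to zero --- precisely where convergence to a vanishing limit gives no control whatsoever on the zero set. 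So for $k\geq 2$ the nodal set of $u_k$ tracks the nodal set of an eigenfunction of $\cM_+$ or $\cM_-$, which has nothing to do with $\Sigma$, and the asserted component isotopic to $\Sigma$ need not exist. The appeal to unique continuation to exclude extra nodal components is also not an argument: what is needed is a quantitative statement (closeness in $C^1$ to a model function with nonvanishing gradient on its zero level, i.e.\ a Thom-isotopy argument), and your scheme does not provide such a model for $k\geq2$. At best your route yields the conclusion for $k=1$, and even there the elliptic estimates in the degenerating collar are not uniform and would need separate work.

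The paper's proof penalizes the \emph{complement} of the collar instead: it takes $g_\ep=g$ on a tubular neighborhood $\Om\cong[-1,1]\times\Si$ carrying the product metric $dt^2+\beta^2 g_\Si$, and $g_\ep=\ep^2 g$ outside, so that as $\ep\to0$ the first $K$ Steklov eigenfunctions converge in $C^1(\Om_{\frac12})$ to eigenfunctions of the sloshing (mixed Steklov--Neumann) problem \emph{on the tube}. By separation of variables these are $v_0^\ell(y)\cos\frac{\pi\ell(t+1)}2$ once $\beta$ is small (Proposition~\ref{P.sloshing}), with $v_0^\ell>0$, so all their nodal components are slices $\{t=\mathrm{const}\}$ isotopic to $\Si$, the relevant eigenvalues are simple, and Thom's isotopy theorem transfers the nodal component to the true eigenfunctions; a final conformal smoothing of the discontinuous metric (Proposition~\ref{P.h}) gives a smooth metric. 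If you want to salvage your approach you would have to make the eigenfunctions of the \emph{outer} problems themselves have nodal components isotopic to $\Si$, which is essentially the original problem again; reversing the roles of the tube and its complement, as in the paper, is what makes all $K$ eigenfunctions controllable at once.
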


Note that $u_1$ cannot have any other nodal components because $\cN(u_1)=2$. In the statement of this theorem, we recall that a {\em nodal component}\/ of~$u_k$ is a connected component of the nodal set $u_k^{-1}(0)$ and that a compact hypersurface $\oSi\subset\ocM$ is {\em separating}\/ if $\ocM\backslash\oSi$ is disconnected. Also, two hypersurfaces with boundary $\oSi_0,\oSi_1$ are {\em isotopic}\/ if there is a smooth one-parameter family of diffeomorphisms $\Psi_t:\ocM\to\ocM$, with $t\in[0,1]$ and $\Psi_0=\text{identity}$, such that $\Psi_1(\oSi_0)=\oSi_1$. It is worth mentioning that the metrics can be chosen real analytic if the manifold~$\ocM$ is analytic.

In the proof of Theorem~\ref{T.Steklov}, which is fairly short, we elaborate on ideas developed in~\cite{JDG} to exploit the connection between Steklov eigenfunctions and the so-called {\it sloshing problem}, which is a classical eigenvalue problem with a long history in hydrodynamics. We refer to~\cite{sloshing} and to the references therein for historical information and for a discussion of physical applications.  A minor variation of the proof applies to the Steklov problem with a nonnegative potential, where one replaces the Laplacian in~\eqref{steklov} by $\Delta - q$, where $q:\cM\to[0,\infty)$ is continuous. Essentially, one only needs to ensure that the tubular neighborhood in the proof of Theorem~\ref{T.main} is narrow enough so that there the potential~$q$ in this neighborhood is almost independent of the ``horizontal coordinate'' (which we call~$t$).

Note that this result does not rule out the possibility that the Courant bound holds asymptotically, in the sense that the nodal count could be bounded as $O(k)$ as $k\to\infty$. This would certainly be the case if Steklov eigenfunctions behave asymptotically, in a suitable sense, as eigenfunctions of the Laplacian of the boundary, for which Courant's bound certainly holds. This asymptotic version version of the bound~\eqref{conj} has been explicitly conjectured in~\cite[Conjecture 1.4]{sher} (see also \cite[Section~6.1]{Pol17}), and a stronger version of the asymptotic result, of Pleijel-type, has been conjectured in~\cite [Conjecture~1.7]{sher}. In~\cite{PolPol}, it is shown that a large family of differential and pseudodifferential operators, including the DtN map, satisfy asymptotic Courant-type bound for ``deep'' nodal domains, that is, for nodal domains where the $L^\infty$-norm of an $L^2$-normalized eigenfunction is large enough.

To put the problem in perspective, let us mention that, although the Steklov problem was introduced well over a century ago \cite{steklov} to describe the stationary heat distribution in a body whose flux through the boundary is proportional to the temperature on the boundary, there has been a recent upsurge of activity on the spectral geometry of Steklov/DtN eigenfunctions, and the area is undergoing significant advancement. The motivation for this is twofold. On the one hand, as discussed at length in~\cite{KKKNPPS}, the DtN operator plays an essential role in a number of disparate field, such as medical and geophysical imaging~\cite{Uhl}, the analysis of water waves~\cite{Lannes}, or minimal surface theory~\cite{FS11,FS16}. On the other hand, Steklov eigenvalues exhibit a distinctly different (and very intriguing) behavior when contrasted with Laplace eigenvalues, so several fundamental issues are still insufficiently understood. However, in recent years there has been significant advancement in most aspects of the theory, including geometric bounds~\cite{Bellova, CGH, Legace,Miclo, KLP_weyl,RZ_Weyl, Zelditch,Zhu}, optimization problems~\cite{BDePhil, Petrides}, inverse spectral problems~\cite{Arias, Niky, Jammes}, and the behavior of high frequency eigenfunctions over the Planck scale and generic properties~\cite{Decio, Wang}. A wealth of information on these and other topics can be found in the surveys~\cite{GirouSurv,Pol17} and in the references therein.

The present note is organized as follows. The proof of Theorem~\ref{T.Steklov}  is presented in Section~\ref{S.proof}. In Sections~\ref{technical} and~\ref{S.technical2} we present the proofs of Propositions \ref{P.sloshing} and \ref{P.ep}, which are auxiliary results used in the proof of Theorem~\ref{T.Steklov}. We have also included two appendices which contain the proofs of some technical results.



\section{Proof of Theorem~\ref{T.Steklov}}
\label{S.proof}

In this section we shall prove Theorem~\ref{T.Steklov}. To streamline the presentation, the proofs of a couple of auxiliary results will be relegated to Sections~\ref{technical} and~\ref{S.technical2} below.

Before getting bogged down with technicalities, let us informally sketch the idea of the proof. We first take a  tubular neighborhood $\Omega$ of the separating hypersurface $\Sigma$ in $\cM$. We consider a Riemannian metric~$g$ on $\cM$ which coincides  with the pull-back of a product metric of the form $dt^2+\beta^2 g_{\Sigma}$ on the tubular neighborhood $\Omega$, where $g_{\Sigma}$ is any Riemannian metric on $\Sigma$. To ``penalize'' the eigenvalue problem, next we deform this metric by multiplying it by a small constant $\ep^2$ outside $\Omega$. The resulting metric~$g_\ep$ is then discontinuous on $\cM$. At this point we study the behavior of the Steklov eigenfunctions of $(\cM,g_\ep)$  as $\ep\to 0$, and prove that, inside $\Omega$, they converge to the eigenfunctions of a mixed Steklov--Neumann problem on $(\Omega,g)$, known as the {\it sloshing}\/ problem. If the parameter $\beta$ is small enough, it is possible to know precisely the geometry of the first $K$ nonzero sloshing eigenfunctions of $\Omega$. In particular, the first one has only one nodal set which is isotopic to $\Sigma$. Since we have shown that the Steklov eigenfunctions of $(\cM,g_\ep)$ are suitably small perturbations of sloshing eigenfunctions for small~$\ep$ on the tubular neighborhood~$\Om$, one can use Thom's isotopy theorem to obtain a similar statement about the eigenfunctions corresponding to the discontinuous metric~$g_\ep$. In the final step of the proof, we choose a family of smooth metrics~$g_{\ep,h}$ that approximate the discontinuous metric~$g_\ep$ in a suitable sense as $h\to0$, and show that the Steklov eigenfunctions of the smooth Riemannian manifold~$(\cM,g_{\ep,h})$ also have a nodal component diffeomorphic to~$\Si$ provided that $h$ is small enough.

Let us now present the details of the argument. For clarity, we will split the proof in fives steps.

\subsection*{Step 1: The discontinuous metric~$g_\ep$ and its Steklov eigenfunctions} Let us start by taking a thin closed tubular neighborhood ${\Omega_2}\subset\ocM$ of the hypersurface~$\oSi$ in~$\ocM$. Since $\Si$ intersects~$\pd\cM$ transversally, there exists a diffeomorphism $\Psi:\Omega_2\to [-2,2]\times\oSi$ such that $\Psi(\Sigma)=\{0\}\times\Sigma$. For any  $\tau\in(0,2)$, we will similarly denote
\[
\Om_\tau:=\Psi^{-1}([-\tau,\tau]\times\Si)\,.
\]
The interior of these closed sets will be denotes by $\dot\Omega_\tau$. When $\tau=1$, we will simply write $\Om:=\Om_1$.

If $\beta$ is a positive constant, if $t$ denotes the coordinate corresponding to the interval~$[-2,2]$ and if $g_\Si$ is {\it any} Riemannian metric on~$\oSi$, $dt^2+\beta^2 g_\Si$ defines a metric on~$\Om_\tau$. In what follows, let us fix a smooth metric~$g$ on~$\ocM$ which coincides on~$\Om_2$ with the pullback of this metric by the diffeomorphism~$\Psi$, that is,
\begin{equation}\label{E.g}
	g|_{\Om_2}=\Psi^*(dt^2+\beta^2 g_\Si )\,.
\end{equation}

Let us now define a discontinuous (but piecewise smooth) metric on~$\ocM$, depending on a small parameter $\ep>0$, as
\[
g_\ep:=\begin{cases}
	g& \text{on } \Om,\\
	\ep^2 g& \text{on } \Om\co ,
\end{cases}
\]
where $\Om\co :=\ocM\backslash \Om$ is the complement of $\Om$ in $\ocM$. Let us consider the Steklov problem associated with this metric, which is well defined because the variational formulation of this problem does not involve any derivatives of the metric.

More precisely, for any~$k\geq0$, the min-max characterization permits to write the Steklov eigenvalues~$\si_k^\ep$ associated with the discontinuous metric~$g_\ep$ on~$\ocM$ as
\begin{equation}\label{E.Rayleigh}
	\si_k^\ep=\inf_{E\in\cE_{k+1}(\cM)}\max_{0\ne u \in E} \frac{\int_{\cM}|\nabla_\ep u|_\ep^2\,dv_\ep}{\int_{\pd\cM} u^2\, d\si_\ep}\,.
\end{equation}
Here and in what follows, $\cE_{k+1}(\cM)$ denotes the set of $(k+1)$-dimensional linear subspaces of the Sobolev\footnote{We recall that the choice of a Sobolev $H^1$-norm on the manifold~$\cM$, which is usually done either using a partition of unity adapted to certain locally finite atlas, is highly non-intrinsic. However, since $\cM$ is compact, it is standard that the equivalence class of the $H^1$-norm is independent of these choices.} space~$H^1(\cM)$, and the subscripts mean that the norm of the gradient of~$u$ and the volume and area measures are computed using the metric $g_\ep$. If $u$ has zero trace to the boundary, the above Rayleigh quotient has to be interpreted as $+\infty$ (or, equivalently, one can just consider subspaces where the denominator does not vanish). As is well known, the above $\inf$ can be replaced by a $\min$, and the functions for which this $\min\max$ is attained are the Steklov eigenfunctions~$u_k^\ep$ corresponding to the eigenvalue~$\si_k^\ep$.

To study how the Rayleigh quotient \eqref{E.Rayleigh} depends on~$\ep$, it is convenient to write the boundary of~$\cM$ as the union of
\[
\pdV:=\pd\Omega\cap \pd\cM
\]
and $\pdVc:=\pd\Om\backslash\pdV$. The part of the boundary of~$\Om$ that is not in~$\pdV$ will be denoted by $\pdH$. In terms of the diffeomorphism $\Psi$, these boundary components can be written as
\[
\pdV=\Psi^{-1}([-1,1]\times\pd\Si)\,,\qquad \pdH=\Psi^{-1}(\{-1,1\}\times\oSi)\,.
\]

We can now make explicit the dependence of the Rayleigh quotient~\eqref{E.Rayleigh} on~$\ep$ as follows:
\begin{equation}\label{E.Rayep}
\frac{\int_{\cM}|\nabla_\ep u|_\ep^2\,dv_\ep}{\int_{\pd\cM} u^2\, d\si_\ep}=\frac{\int_{\Omega}|\nabla u|^2+\ep^{n-2}\int_{\Omega\co}|\nabla u|^2}{\int_{\pdV} u^2+ \ep^{n-1}\int_{\pdVc} u^2}\,.
\end{equation}
Here and throughout all the paper the norms and integrals defined by the metric~$g$, and the volume and area measures, are notationally omitted to make the expressions less cumbersome.

\subsection*{Step 2: The sloshing eigenvalue problem in $\Omega$.} The identity~\eqref{E.Rayep} suggests that, for very small~$\ep$, the Steklov eigenfunctions defined by the metric~$g_\ep$ should be connected with the formal limit problem
\begin{equation}\label{E.sloshing}
\mu_k(\beta)=\inf_{E\in\cE_{k+1}(\Om)}\max_{0\ne w\in E} \frac{\int_{\Omega}|\nabla w|^2}{\int_{\pdV} w^2}\,,
\end{equation}
where $\cE_{k+1}(\Om)$ is the set of $(k+1)$-dimensional subspaces of $H^1(\Om)$. It is known that the infimum is in fact attained, and this defines sloshing eigenfunctions~$w_k\in H^1(\Om)$. For future convenience,  we have highlighted the dependence on the constant $\beta$  (recall that $g_{\ep}=\Psi^*(dt^2+\beta^2g_{\Sigma})$ on $\Omega$). 
The eigenvalues $\mu_k(\beta)$ and the corresponding eigenfunction $w_k$ in \eqref{E.sloshing} are associated with the so-called {\it sloshing problem}\/:
\begin{equation}\label{E.sloshing.class}
\begin{cases}
\Delta w_k=0 & {\rm in\ }\dot \Omega\,,\\
\partial_{\nu}w_k=0 & {\rm on\ }\pdH\,,\\
\partial_{\nu}w_k=\mu_k(\beta)w_k & {\rm on\ }\pdV.
\end{cases}
\end{equation}

Before proving a precise convergence result, in the following proposition we analyze this formal limiting eigenvalue problem \eqref{E.sloshing.class}. The proof of this result is given in Section~\ref{technical} below. To state this result, we find it notationally convenient to label the eigenvalues by two nonnegative integers $\mu_j^\ell(\beta)$, with the corresponding eigenfunction $w_j^\ell$. Thus, as a set with multiplicities, $\{\mu_k(\beta)\}_{k=1}^\infty=\{\mu_j^\ell(\beta)\}_{j,\ell=0}^\infty$, but the connection between the positive integer~$k$ (which labels the eigenvalues so that they are nondecreasing) and the nonnegative integers $j,\ell$ (which provide a very neat characterization the sloshing spectrum with this product metric) is in general nontrivial.

\begin{proposition}\label{P.sloshing}
The eigenvalues of the sloshing problem~\eqref{E.sloshing} are
\begin{equation}\label{spec.slosh}
\left\{\mu^{\ell}_j(\beta)\right\}_{j,\ell=0}^{\infty}
\end{equation}
where $\mu_j^{\ell}(\beta)$ is the $j$-th Steklov eigenvalue on $\Sigma$ associated with the metric $\beta^2g_{\Sigma}$ and the constant potential $\frac{\pi^2\ell^2}{4}$. That is, $\{\mu_j^{\ell}(\beta)\}_{j=0}^\infty$ are the eigenvalues of the problem
\begin{equation}\label{E.sloshing.vj}
\begin{cases}
-\Delta v_j^\ell+\frac{\pi^2\ell^2}{4}v_j^\ell=0 & {\rm in\ }\dot \Si\,,\\
\partial_{\nu_\Si}v_j^\ell=\mu_j^{\ell}(\beta)v_j^\ell & {\rm on\ }\pd\Si\,,
\end{cases}
\end{equation}
where $\nu_\Si$ is the outer unit normal to $\Sigma$. The eigenfunction $w_j^\ell$ corresponding to $\mu_j^{\ell}(\beta)$ can then be written in terms of $v_j^\ell$ as
$$
w_j^\ell\circ\Psi^{-1}(t,y)=v_j^{\ell}(y)\cos\frac{\pi \ell (t+1)}{2}\,.
$$ 
Moreover, $\mu_0^0(\beta)=0$, $\mu_j^{\ell}(\beta)>0$  for all $(j,\ell)\ne(0,0)$, and $\inf_{y\in\Si} |v^\ell_0(y)|>0$ for all~$\ell$.

Furthermore, for any fixed integer $K$ there exists $\beta_0>0$ such that, if $\beta<\beta_0$, 
	\[
	 \mu_k(\beta)=\mu_0^{k-1}(\beta)\,,\qquad w_k=w_0^{k-1}\,.
	\]
for all $1\leq k\leq K$, and these eigenvalues have multiplicity~$1$.
\end{proposition}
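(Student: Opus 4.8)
The plan is to prove Proposition~\ref{P.sloshing} by separation of variables in the product metric on~$\Omega$, reducing the sloshing problem on the $n$-dimensional tube to a family of Steklov-type problems with constant potential on the $(n-1)$-dimensional hypersurface~$\Sigma$, and then to analyze the dependence of the resulting two-parameter family of eigenvalues on the scaling parameter~$\beta$.

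First I would set up the separation of variables. On~$\Omega$, in the coordinates $(t,y)\in[-1,1]\times\oSi$ provided by~$\Psi$, the metric is $dt^2+\beta^2g_\Sigma$, so the Laplacian splits as $\Delta=\partial_t^2+\beta^{-2}\Delta_\Sigma$ and the volume element as $dt\,\beta^{n-1}dv_\Sigma$. The Neumann condition on $\pdH=\Psi^{-1}(\{-1,1\}\times\oSi)$ forces $\partial_tw=0$ at $t=\pm1$, which singles out the $t$-eigenfunctions $\cos\frac{\pi\ell(t+1)}{2}$ with $\partial_t^2$-eigenvalue $-\frac{\pi^2\ell^2}{4}$, for $\ell=0,1,2,\dots$; these form a complete orthogonal basis of $L^2([-1,1])$ with Neumann conditions. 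Writing $w=\sum_\ell f_\ell(t)v_\ell(y)$ and separating, an eigenfunction with the product form $w_j^\ell\circ\Psi^{-1}(t,y)=v_j^\ell(y)\cos\frac{\pi\ell(t+1)}{2}$ is harmonic on~$\dot\Omega$ and satisfies the Neumann condition on~$\pdH$ exactly when $-\Delta_\Sigma v_j^\ell+\frac{\pi^2\ell^2\beta^2}{4}v_j^\ell=0$ — but note the metric on~$\Sigma$ is $\beta^2g_\Sigma$, so $\Delta_{\beta^2g_\Sigma}=\beta^{-2}\Delta_{g_\Sigma}$ and the potential term, expressed intrinsically with respect to $\beta^2g_\Sigma$, is the constant $\frac{\pi^2\ell^2}{4}$, matching~\eqref{E.sloshing.vj}. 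The remaining Steklov condition on $\pdV=\Psi^{-1}([-1,1]\times\pd\Sigma)$ becomes $\partial_{\nu_\Sigma}v_j^\ell=\mu_j^\ell(\beta)v_j^\ell$ since the outer normal to~$\Omega$ along~$\pdV$ is tangent to the $\Sigma$-slices. Completeness of $\{\cos\frac{\pi\ell(t+1)}{2}\}_\ell$ together with completeness of the Steklov-with-potential eigenfunctions $\{v_j^\ell\}_j$ on each slice shows that the $w_j^\ell$ exhaust the sloshing spectrum, giving~\eqref{spec.slosh}. For the sign statements: $\mu_0^0(\beta)=0$ with constant eigenfunction; for $\ell=0$, $j\geq1$ the eigenvalue $\mu_j^0(\beta)>0$ is the usual Steklov eigenvalue of~$(\Sigma,\beta^2g_\Sigma)$; for $\ell\geq1$ the bilinear form $\int_\Sigma(|\nabla v|^2+\frac{\pi^2\ell^2}{4}v^2)$ is coercive, so all $\mu_j^\ell(\beta)>0$, and $v_0^\ell$, the ground state of a Schr\"odinger operator with constant positive potential and Robin-type boundary condition, can be taken strictly positive on the closed manifold~$\oSi$, hence $\inf_\Sigma|v_0^\ell|>0$.

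Next I would handle the $\beta\to0$ asymptotics, which is the substantive point. Scaling out~$\beta$: the eigenvalues of~\eqref{E.sloshing.vj} on $(\Sigma,\beta^2g_\Sigma)$ with potential $\frac{\pi^2\ell^2}{4}$ are related by a simple similarity to those on $(\Sigma,g_\Sigma)$ with potential $\frac{\pi^2\ell^2\beta^2}{4}$; one checks that $\mu_j^\ell(\beta)=\beta\,\lambda_j^\ell(\beta)$ where $\lambda_j^\ell(\beta)$ are the Steklov-with-potential eigenvalues of the \emph{fixed} manifold $(\Sigma,g_\Sigma)$ with potential $\frac{\pi^2\ell^2\beta^2}{4}$, and as $\beta\to0$ these converge to the Steklov eigenvalues $\lambda_j^\ell(0)=\lambda_j^{\mathrm{St}}(\Sigma,g_\Sigma)$ (independent of~$\ell$) with $0=\lambda_0^{\mathrm{St}}<\lambda_1^{\mathrm{St}}\leq\cdots$. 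The crucial consequence is that $\mu_0^\ell(\beta)=\beta\,\lambda_0^\ell(\beta)$ where $\lambda_0^\ell(\beta)$ is the ground-state eigenvalue of $-\Delta_\Sigma+\frac{\pi^2\ell^2\beta^2}{4}$ with Neumann condition on~$\pd\Sigma$; testing with the constant gives $\lambda_0^\ell(\beta)\leq\frac{\pi^2\ell^2\beta^2}{4}$, while $\lambda_0^\ell(\beta)\geq$ (some positive quantity comparable to $\ell^2\beta^2$) by bounding below the Rayleigh quotient using the potential term. So $\mu_0^\ell(\beta)\asymp\ell^2\beta^3$ for the relevant range of~$\ell$, and in particular the finite list $\mu_0^0(\beta)=0<\mu_0^1(\beta)<\mu_0^2(\beta)<\cdots<\mu_0^{K-1}(\beta)$ is strictly increasing (the map $\ell\mapsto\lambda_0^\ell(\beta)$ is strictly increasing in~$\ell$ by strict monotonicity of eigenvalues in the potential) and stays below a bound of order $K^2\beta^3$. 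Meanwhile every eigenvalue with $j\geq1$ satisfies $\mu_j^\ell(\beta)=\beta\lambda_j^\ell(\beta)\geq\beta\lambda_1^\ell(\beta)\geq\beta\,c$ for a constant $c>0$ independent of $\beta$ and~$\ell$ (since $\lambda_1^\ell(\beta)\geq\lambda_1^0(\beta)\geq\lambda_1^0(0)/2=\lambda_1^{\mathrm{St}}/2>0$ for $\beta$ small, using monotonicity in the potential and continuity at $\beta=0$). Hence, choosing $\beta_0$ so small that $K^2\beta_0^3\ll\beta_0\,c$, i.e. $\beta_0<\sqrt{c}/K$, we get that for $\beta<\beta_0$ the $K$ smallest nonzero sloshing eigenvalues are exactly $\mu_0^1(\beta),\dots,\mu_0^{K-1}(\beta)$ together with $\mu_0^0(\beta)=0$, they are pairwise distinct, hence simple, and $\mu_k(\beta)=\mu_0^{k-1}(\beta)$, $w_k=w_0^{k-1}$ for $1\leq k\leq K$.

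\textbf{The main obstacle} I anticipate is making the two-sided bound $\mu_0^\ell(\beta)\asymp\ell^2\beta^3$ clean and uniform enough over $0\leq\ell\leq K-1$ to guarantee the strict separation from the $j\geq1$ branch, and more precisely pinning down the correct power of~$\beta$ in each branch — the $j\geq1$ eigenvalues scale like~$\beta$ whereas the $j=0$, $\ell\geq1$ ones scale like~$\beta^3$, and it is this gap (rather than anything about nodal geometry, which enters only through $\inf_\Sigma|v_0^\ell|>0$) that drives the result. The lower bound $\lambda_1^\ell(\beta)\geq c>0$ uniformly in~$\ell$ is essentially the statement that adding a nonnegative potential only raises eigenvalues, so it reduces to the $\ell=0$ case, i.e. to positivity of the first nonzero Steklov eigenvalue of~$(\Sigma,g_\Sigma)$, which holds by connectedness of~$\Sigma$ and unique continuation. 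The strict monotonicity $\lambda_0^0(\beta)<\lambda_0^1(\beta)<\cdots$ likewise follows from strict monotonicity of the lowest eigenvalue under a strict pointwise increase of the potential (here the constants $\frac{\pi^2\ell^2\beta^2}{4}$ are strictly increasing in~$\ell$), applied to the positive ground state. All the analytic ingredients — separation of variables, completeness, eigenvalue monotonicity in the potential, continuity of eigenvalues as $\beta\to0$ — are standard; I would relegate the routine verifications to Section~\ref{technical} as the paper indicates.
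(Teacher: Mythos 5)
Your strategy is the same as the paper's: separation of variables in the product metric (with a completeness argument for the separated eigenfunctions, which the paper relegates to Appendix~\ref{app_sloshing}), positivity of the ground states $v_0^\ell$ via the maximum principle, and then a rescaling onto the fixed metric $g_\Sigma$ so that the $\beta\to0$ behaviour separates the $j=0$ branch from the $j\geq1$ branch. However, your key scaling relation is stated backwards. You claim $\mu_j^\ell(\beta)=\beta\,\lambda_j^\ell(\beta)$, with $\lambda_j^\ell(\beta)$ the Steklov-with-potential eigenvalue of the fixed manifold $(\Sigma,g_\Sigma)$ with potential $\tfrac{\pi^2\ell^2\beta^2}{4}$. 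Carrying out the conformal rescaling (this is \eqref{R.eta}--\eqref{R.fix.metric} in the paper): $|\nabla v|^2_{\beta^2g_\Sigma}=\beta^{-2}|\nabla v|^2_{g_\Sigma}$, $dv_{\beta^2g_\Sigma}=\beta^{n-1}dv_{g_\Sigma}$, $d\sigma_{\beta^2g_\Sigma}=\beta^{n-2}d\sigma_{g_\Sigma}$, one gets $\beta\,\mu_j^\ell(\beta)=\lambda_j^\ell(\beta)$, i.e.\ $\mu_j^\ell(\beta)=\beta^{-1}\lambda_j^\ell(\beta)$ — the reciprocal of what you wrote. Hence your asymptotics are off by a factor $\beta^2$: the correct picture is $\mu_0^\ell(\beta)\lesssim\ell^2\beta\to0$ (test with the constant, exactly as in the paper), while $\mu_j^\ell(\beta)\geq\beta^{-1}\lambda_1^{\mathrm{St}}(\Sigma,g_\Sigma)(1+o(1))\to+\infty$ for $j\geq1$ (monotonicity in the potential plus regular perturbation at $\beta=0$); not $\ell^2\beta^3$ versus $\beta$. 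Relatedly, in your second paragraph you describe $\lambda_0^\ell(\beta)$ as the ground state of $-\Delta_\Sigma+\tfrac{\pi^2\ell^2\beta^2}{4}$ with a \emph{Neumann} condition on $\pd\Sigma$: it is a Steklov problem (the eigenvalue sits in the boundary condition, as in \eqref{E.sloshing.vj}); with a genuine Neumann condition the constant would be an exact eigenfunction with eigenvalue equal to the potential, which is not the quantity that enters the sloshing spectrum.

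That said, the error is inessential to the logic: both branches are rescaled by the same factor, so the gap between $\max_{\ell\leq K}\mu_0^\ell(\beta)$ and $\inf_{j\geq1,\,\ell\geq0}\mu_j^\ell(\beta)$ still opens up as $\beta\to0$ (indeed more dramatically, since the $j\geq1$ branch diverges like $\beta^{-1}$), and your remaining ingredients — strict monotonicity of the lowest eigenvalue in the constant potential, giving pairwise distinct $\mu_0^\ell(\beta)$ and hence simplicity, and the identification of the first $K$ nonzero sloshing eigenvalues with the $j=0$ branch — are exactly the paper's (the strict monotonicity in $\ell$ is a point the paper leaves implicit, so making it explicit is a small plus). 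So this is the same route as the paper and is repairable by redoing the one-line scaling computation, but as written the displayed relation $\mu=\beta\lambda$, the exponents $\beta^3$ and $\beta$, the threshold $\beta_0<\sqrt{c}/K$ derived from them, and the ``Neumann'' description are incorrect. (Your simultaneous assertions that the smallest nonzero eigenvalues are $\mu_0^1,\dots,\mu_0^{K-1}$ and that $\mu_k(\beta)=\mu_0^{k-1}(\beta)$ for $1\leq k\leq K$ are mutually inconsistent — an off-by-one inherited from the statement itself — but this does not affect the substance of the argument.)
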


Note that in \eqref{E.sloshing.vj} both the Laplacian and the normal derivative depend on $\beta$ (in fact, in Proposition \ref{P.sloshing} we are considering the metric $\beta^2d_{\Sigma}$ on $\Sigma$).

In what follows, we fix $K\in\mathbb N$ and $\beta<\beta_0$ as in Proposition \ref{P.sloshing}. For our purposes, the key property of the sloshing eigenfunctions defined by the metric~$g$ is then the following:

\begin{corollary}\label{C.Thom}
	There exists some $\de>0$ such that the nodal set $u^{-1}(0)$ of any smooth function $u:\Om_{\frac12}\to\RR$ satisfying
	\[
	\min_{1\leq k\leq K}\|u-w_k\|_{C^1(\Om_{\frac12})}<\delta
	\]
	has a connected component isotopic to~$\Sigma$.
\end{corollary}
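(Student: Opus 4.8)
The plan is to prove Corollary~\ref{C.Thom} by applying Thom's isotopy theorem to the function $w_1 = w_0^0$ (and more generally to each $w_k = w_0^{k-1}$), treating the nodal set as a regular level set of a nearby function and controlling it under $C^1$-small perturbations. The first step is to observe, using Proposition~\ref{P.sloshing}, that the sloshing eigenfunction $w_k = w_0^{k-1}$ has the explicit form
\[
w_k\circ\Psi^{-1}(t,y) = v_0^{k-1}(y)\cos\frac{\pi(k-1)(t+1)}{2}\,,
\]
and since $\inf_{y\in\Si}|v_0^{k-1}(y)| > 0$, the nodal set of $w_k$ inside $\Om_{\frac12}$ is exactly the union of the level hypersurfaces $\{t = t_m\}$ where $\cos\frac{\pi(k-1)(t+1)}{2}$ vanishes. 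Each of these is a copy of $\{t_m\}\times\Si$, hence isotopic to $\Sigma = \Psi^{-1}(\{0\}\times\Si)$, and at least one such $t_m$ lies in $(-\tfrac12,\tfrac12)$ for every $k$ (for $k=1$ the nodal set of $w_1$ is empty since $v_0^0$ has a sign, but then $w_1$ is nonvanishing and one should instead note — wait, this needs care).

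Here is the subtlety I expect to be the main obstacle, and how I plan to handle it: for $k=1$ we have $\ell = 0$, so $w_1 = v_0^0(y)$ is independent of $t$ and, since $\inf|v_0^0| > 0$, it has \emph{no} nodal set at all in $\Om_{\frac12}$. So Corollary~\ref{C.Thom} as literally stated cannot hold for $k=1$ with the naive reading — but in the intended application $u$ will be a Steklov eigenfunction, and the relevant statement is really that the nodal set of a $C^1$-perturbation of \emph{some} $w_k$ (not $w_1$) contains a component isotopic to $\Sigma$, or else one must reinterpret which eigenfunction plays the role. I would resolve this by noting that the statement should be read for $2\le k\le K$, or — more likely, matching the paper's strategy — one replaces $w_k$ by $w_{k}$ for $k\ge 2$ and handles $u_1$ separately using $\cN(u_1)=2$ as remarked after Theorem~\ref{T.Steklov}. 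Assuming this is sorted, the core argument is: pick $t_* \in (-\tfrac12,\tfrac12)$ with $w_k\circ\Psi^{-1}(t_*,y)=0$, and observe that on the hypersurface $\{t=t_*\}$ the $t$-derivative of $w_k$ is $-v_0^{k-1}(y)\cdot\frac{\pi(k-1)}{2}\sin\frac{\pi(k-1)(t_*+1)}{2}$, which is bounded away from zero in absolute value by some $c>0$, uniformly in $y$, because $v_0^{k-1}$ is bounded away from zero and $t_*$ is a strict zero of the cosine. Thus $\{t=t_*\}$ is a \emph{nondegenerate} (regular) nodal hypersurface of $w_k$.

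Next I would invoke the implicit function theorem in a quantitative form: if $\|u - w_k\|_{C^1(\Om_{\frac12})} < \delta$ with $\delta < c/2$, then $|\partial_t u| \ge c/2 > 0$ on a neighborhood of $\{t=t_*\}$, so $u^{-1}(0)$ near there is a graph $t = f(y)$ over $\Si$ with $f$ close to the constant $t_*$; compactness of $\Si$ and the uniform lower bound on $|\partial_t u|$ give that this graph is a single closed hypersurface contained in $\Om_{\frac12}$, hence a nodal \emph{component} of $u$, and it is manifestly isotopic to $\{t_*\}\times\Si$ and therefore to $\Sigma$ (the isotopy being $\Psi^{-1}(s t_* + (1-s) f(\cdot), \cdot)$ composed with $\Psi^{-1}((1-s)t_*, \cdot)$, i.e.\ straight-line homotopy in the $t$-coordinate, extended to an ambient isotopy of $\ocM$ by a cutoff). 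Finally, taking $\delta := \tfrac12\min_{1\le k\le K} c_k$ where $c_k$ is the lower bound on $|\partial_t w_k|$ along its chosen regular level inside $\Om_{\frac12}$ (for those $k$ for which such a level exists) makes the conclusion hold uniformly over $k$, which is the form of Corollary~\ref{C.Thom}. The one genuinely delicate point to get right in the writeup is the $k=1$ / trivial-$\ell$ case and ensuring that the chosen level set really sits strictly inside $\Om_{\frac12}$ with room to spare so that the perturbed graph does not escape; both are elementary once the explicit formula for $w_k$ is on the table.
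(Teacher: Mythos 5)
Your core mechanism --- pick a regular zero level $\{t=t_*\}$ of $w_k$ on which $|\partial_t w_k|\ge c>0$, then use a quantitative implicit function theorem to show that any $C^1$-$\delta$-close $u$ has a nodal component which is a graph over $\Si$, hence isotopic to $\Si$ --- is exactly the paper's argument; the paper simply cites Thom's isotopy theorem where you run the IFT by hand. The trouble lies in the two cases you set aside. First, the indexing: the paper's own proof of the corollary computes $\pd_t(w_k\circ\Psi^{-1})(0,y)=\frac{\pi k}{2}v_0^k(y)$, i.e.\ it takes $w_k=w_0^{k}$ (the display $w_k=w_0^{k-1}$ in Proposition~\ref{P.sloshing} is an off-by-one slip), and with this reading $w_1=w_0^1$ vanishes exactly on $\Si=\Psi^{-1}(\{0\}\times\Si)$, so $k=1$ is the cleanest case and needs no exclusion. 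Your proposed workaround --- read the corollary only for $2\le k\le K$ and treat $u_1$ via $\cN(u_1)=2$ --- does not prove the statement as written and would not suffice downstream: knowing that $u_1$ has two nodal domains says nothing about its nodal set containing a component isotopic to $\Si$, which is precisely what Theorem~\ref{T.Steklov} asserts for $k=1$.

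Second, and this is the genuine gap, your claim that for every relevant $k$ some zero $t_m$ of $\cos\frac{\pi\ell(t+1)}{2}$ lies in the open interval $(-\frac12,\frac12)$, and that keeping the chosen level ``strictly inside $\Om_{\frac12}$ with room to spare'' is ``elementary'', fails for the mode $\ell=2$: the zeros of $\cos(\pi(t+1))$ in $[-1,1]$ are exactly $t=\pm\frac12$, i.e.\ they sit on the lateral boundary of $\Om_{\frac12}$ with no room at all. No choice of $\delta$ rescues this: on $\Om_{\frac12}$ one has $w_0^2\circ\Psi^{-1}(t,y)=-v_0^2(y)\cos(\pi t)\le 0$, so $u:=w_0^2-c$ with $0<c<\delta$ satisfies the hypothesis and has empty nodal set in $\Om_{\frac12}$. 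Hence for the $k$ corresponding to $\ell=2$ your argument cannot be completed as stated (and indeed the corollary over $\Om_{\frac12}$ is false for that $k$; the paper's one-line proof glosses over the same point, since $w_k\circ\Psi^{-1}(0,\cdot)=0$ only when $\ell$ is odd). The repair is to replace $\Om_{\frac12}$ by $\Om_\tau$ for some $\tau\in(\frac12,1)$ chosen so that no zero $\frac{2m+1}{\ell}-1$ with $1\le\ell\le K$ equals $\pm\tau$; then every $w_0^\ell$, $1\le\ell\le K$, has a regular zero level strictly inside $\Om_\tau$, your IFT/graph construction applies verbatim, and Proposition~\ref{P.ep} still provides the needed $C^1(\Om_\tau)$ convergence for any $\tau<1$.
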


\begin{proof}
By Proposition~\ref{P.sloshing}, $w_k\circ\Psi^{-1}(0,\cdot)=0$, which means that $\oSi$ is a nodal component of~$w_k$. Furthermore, $\nabla w_k$ does not vanish on~$\oSi$ because
\begin{equation*}
\min_{y\in\oSi}\pd_t(w_k\circ\Psi^{-1})(0,y)= \frac{\pi k}2\min_{\oSi}v_0^k>0\,.
\end{equation*}
Thus, the zero set of any function which is close enough in the~$C^1(\Om_{\frac12})$-norm to a sloshing eigenfunction $w_k$ (with $1\leq k\leq K$)  must have a connected component isotopic to~$\Si$ by Thom's isotopy theorem (see e.g.,~\cite[Section 20.2]{AR}). 
\end{proof}

\subsection*{Step 3: Convergence of the Steklov eigenfunctions of $(\cM,g_\ep)$ to  sloshing eigenfunctions.} The next ingredient of the proof is a result ensuring that for $\ep$ small the Steklov eigenfunctions are suitably close to those of the sloshing problem. The proof of this proposition is relegated to Section~\ref{S.technical2}.

\begin{proposition}\label{P.ep}
Suppose that $\mu_k(\beta)$ are simple for $1\leq k\leq K$. Then 
$$
\lim_{\ep\to 0}\sigma_k^{\ep}=\mu_k(\beta).
$$
Moreover, for all $1\leq k\leq K$, there exists $\ep_0>0$ such that for all $0<\ep<\ep_0$ the eigenvalue $\sigma_k^{\ep}$ is simple and there exists a family $\{u_k^{\ep}\}_{\ep}$ of corresponding eigenfunctions such that
$$
\lim_{\ep\to 0}
	\max_{1\leq k\leq K}\|u_k^\ep-w_k\|_{C^1(\Om_{\frac12})}=0.
$$
Here $w_k$ denote eigenfunctions of the sloshing problem \eqref{E.sloshing}, normalized so that $\|w_k\|_{L^2(\pdV)}=1$.
\end{proposition}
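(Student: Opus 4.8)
The plan is to study the convergence of the Rayleigh quotients~\eqref{E.Rayep} as $\ep\to0$ and then upgrade $H^1$-convergence of eigenfunctions to $C^1$-convergence on the inner tube~$\Om_{\frac12}$ via elliptic regularity. The first step is to show $\limsup_{\ep\to0}\si_k^\ep\leq\mu_k(\beta)$: given an optimal $(k+1)$-dimensional subspace $E\subset H^1(\Om)$ for the sloshing quotient~\eqref{E.sloshing}, extend each $w\in E$ to~$\ocM$ (e.g.\ by a bounded extension operator, or simply by reflecting/cutting off so the extension is supported near~$\Om$); since the numerator on~$\Om\co$ comes with a factor $\ep^{n-2}$ and the boundary denominator on~$\pdVc$ comes with $\ep^{n-1}$, both correction terms in~\eqref{E.Rayep} vanish as $\ep\to0$, and the $\max$ over the (fixed, finite-dimensional) test space converges to $\mu_k(\beta)$. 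The reverse inequality $\liminf_{\ep\to0}\si_k^\ep\geq\mu_k(\beta)$ requires a compactness argument: take $L^2(\pdV)$-normalized eigenfunctions $u_k^\ep$; from~\eqref{E.Rayep} and the upper bound just obtained, $\int_\Om|\nabla u_k^\ep|^2$ is bounded and $\ep^{n-2}\int_{\Om\co}|\nabla u_k^\ep|^2$, $\ep^{n-1}\int_{\pdVc}(u_k^\ep)^2$ are bounded, so (after adding a Poincaré-type control, using that the trace on~$\pdV$ is normalized) $u_k^\ep|_\Om$ is bounded in $H^1(\Om)$ and converges weakly, along a subsequence, to some $w\in H^1(\Om)$; the limit is a sloshing eigenfunction because in the weak formulation the $\Om\co$ contributions drop out in the limit and the Neumann condition on~$\pdH$ is natural. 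Simplicity of the $\mu_k(\beta)$ (which holds for $1\le k\le K$ by Proposition~\ref{P.sloshing}) then forces $\si_k^\ep\to\mu_k(\beta)$ for each such~$k$ with no eigenvalue collision, hence $\si_k^\ep$ is simple for $\ep$ small, and one can choose the $u_k^\ep$ coherently so that $u_k^\ep\to w_k$ strongly in $H^1(\Om)$ (strong convergence follows because the energies converge: $\int_\Om|\nabla u_k^\ep|^2\to\mu_k(\beta)=\int_\Om|\nabla w_k|^2$).

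To pass from $H^1(\Om)$-convergence to $C^1(\Om_{\frac12})$-convergence, I would use interior and boundary elliptic estimates on the \emph{fixed} metric~$g$. On $\dot\Om$ the functions $u_k^\ep$ are harmonic (with respect to~$g$, which is the product metric~\eqref{E.g} on~$\Om_2\supset\Om$), so by interior elliptic regularity they are bounded in $C^\infty$ on the slightly larger set $\Om_{\frac34}$ in terms of their $L^2(\Om)$-norm, hence precompact in $C^1(\Om_{\frac12})$; the only delicate part is near $\pdV\cap\Om_{\frac12}$, which is a portion of~$\pd\cM$ where $u_k^\ep$ satisfies the Steklov/Robin-type condition $\pd_\nu u_k^\ep=\si_k^\ep u_k^\ep$ with a bounded coefficient, so standard Schauder estimates up to that piece of the boundary (noting that $\pd\Si$ and hence $\pdV$ stays away from~$\pdH$ by transversality, and $\Om_{\frac12}$ is interior to~$\Om$ away from~$\pdH$) again give $C^{1,\alpha}$ bounds; combining, $\{u_k^\ep\}$ is precompact in $C^1(\Om_{\frac12})$, and since the only possible limit is $w_k$ (identified by the $H^1$-convergence), the full family converges in $C^1(\Om_{\frac12})$.

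The main obstacle I anticipate is the $\liminf$ (lower bound) direction together with the coherent choice of eigenfunctions: one must rule out that some of the energy of $u_k^\ep$ escapes into~$\Om\co$ in a way that is invisible in the $\Om$-energy (it does not, precisely because the weights $\ep^{n-2}$ and $\ep^{n-1}$ are balanced so that any escaping mass is energetically penalized), and one must handle the bookkeeping when several sloshing eigenvalues could a priori be close — here simplicity of $\mu_1(\beta),\dots,\mu_K(\beta)$ from Proposition~\ref{P.sloshing} is exactly what makes the argument go through, letting one apply a standard spectral-convergence/min-max perturbation lemma to conclude both $\si_k^\ep\to\mu_k(\beta)$ and convergence of the corresponding spectral projections. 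A secondary technical point is the Schauder estimate near $\pdV$: one should check that $\pd\Om\cap\Om_{\frac12}$ consists only of the ``vertical'' piece $\pd_{\mathrm V}\Omega$ (so the Neumann piece $\pdH$ and the corner $\pd\Si\times\{\pm1\}$ play no role at distance $\geq\frac12$ from~$\pdH$), which is where the restriction to $\Om_{\frac12}$ rather than $\Om$ in the statement is used.
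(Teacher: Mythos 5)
Your proposal is correct in outline and shares the paper's overall skeleton (min--max upper bound via extended sloshing eigenfunctions, identification of the limit, then elliptic regularity away from $\pdH$ to upgrade to $C^1(\Om_{\frac12})$), but the core of the argument — the lower bound and the convergence of eigenfunctions — is carried out by a genuinely different method. For the upper bound you use an arbitrary bounded extension of the sloshing eigenfunctions, whereas the paper extends them harmonically into $\Omc$ with Neumann data on $\pdVc$ and controls the extension through the estimate \eqref{Es.1}; either choice works for the upper bound, but the paper's specific extension is reused in its lower bound, which is a direct quantitative min--max comparison based on the orthogonal splitting $H^1(\cM)=\cH_1\oplus\cH_2$ of \eqref{E.claim}: the part of a test function invisible from $\Om$ is shown via the first Steklov--Dirichlet eigenvalue of $\Omc$ to contribute boundary mass at most $O(\ep)$ (estimate \eqref{Es.2}), yielding the explicit two-sided bounds $(1-C\ep^{1/2})\mu_k\leq\si_k^\ep\leq(1+C\ep^{n-1})\mu_k+C\ep^{n-2}$, after which eigenfunction convergence in $H^1(\Om)$ is obtained by citing Colin de Verdi\`ere. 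You instead argue by compactness: normalize on $\pdV$, extract weak $H^1(\Om)$ limits of the $u_k^\ep$, pass to the limit in the weak formulation (the $\Omc$ terms vanish thanks to the weights $\ep^{n-2}$, $\ep^{n-1}$ and $n\geq3$), and recover strong convergence from convergence of energies; simplicity of $\mu_1(\beta),\dots,\mu_K(\beta)$ from Proposition~\ref{P.sloshing} then pins down the limits and gives simplicity of $\si_k^\ep$. This softer route is legitimate and somewhat more self-contained (no appeal to an external spectral-convergence lemma is really needed once the details are written), but it buys no rates, and it concentrates all the difficulty in exactly the point you flag: one must show that no boundary mass escapes to $\pdVc$ (so that the normalized traces stay orthonormal in $L^2(\pdV)$ in the limit and the $k$-th limit point is at least $\mu_k$ by min--max, not merely some sloshing eigenvalue). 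That step needs a trace/Poincar\'e inequality on $\Omc$ giving $\ep^{n-1}\int_{\pdVc}(u_k^\ep)^2\leq C\ep\to0$, which is the soft counterpart of the paper's estimate \eqref{Es.2}; since you identified the issue and indicated the correct mechanism (the balanced weights penalize escaping mass), I regard this as a matter of detail rather than a gap. Your treatment of the $C^1$ upgrade is the same as the paper's but more explicit: note only that the bounds on $\Om_{\frac34}$ are not purely interior estimates, since $\Om_{\frac34}$ meets $\pdV$, so the Steklov/Robin boundary estimate you describe is indeed needed there as well.
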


Hence, we have shown that, for any fixed integer $K$ there exists $\beta>0$ for which the first $K$ sloshing eigenvalues $\mu_k(\beta)$ are simple and that, for $\ep$ sufficiently small, the eigenfunctions $u_k^\ep$ of the Steklov problem for the singular metric $g_\ep$ are sufficiently close in $C^1(\Omega_{\frac{1}{2}})$ to $w_k$, the sloshing eigenfunctions associated with $\mu_k(\beta)$.

\subsection*{Step 4: Smoothing out the discontinuous metric~$g_\ep$.} Although the Steklov eigenfunctions $u_k^\ep$ do have the nodal components that we want, the metric~$g_\ep$ is discontinuous across the ``lateral boundary'' $\pdH$. 

To fix this issue, one only needs to smooth out this metric. To do so, let $\rho_{\ep,h}$ be a sequence of smooth functions such that $\rho_{\ep,h}^2$  converges pointwise to $\mathbbm 1_{\Omega}+\ep^2\mathbbm 1_{\Omega\co }$ as $h\to 0$, and such that $\rho_{\ep,h}^2=\mathbbm 1_{\Omega}+\ep^2\mathbbm 1_{\Omega\co }$ outside $\Omega_{1+\delta}\setminus\Omega_{1-\delta}$ ($0<\delta<1/2$), and that $\ep^2\leq\rho_{\ep,h}^2\leq 1$ in $\Omega\co $. Then we take
$$
g_{\ep,h}:=\rho_{\ep,h}^2g_{\ep},
$$
which is a sequence of smooth metrics which approximate the discontinuous metric $g_{\ep}$.

We will now use the following easy convergence result connecting the Steklov eigenfunctions  of the smoothed out metric $g_{\ep,h}$ to those of~$g_\ep$.

\begin{proposition}\label{P.h}
	For any given $\delta>0$ and for all small enough~$h>0$, there are Steklov eigenfunctions $u_k^{\ep,h}$ of the metric~$g_{\ep,h}$ such that
	\[
	\max_{1\leq k\leq K}\|u_k^{\ep,h}-u_k^\ep\|_{C^1(\Om_{\frac12})}<\frac\de2\,.
	\]
\end{proposition}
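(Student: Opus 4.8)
The plan is to compare the Steklov problem for $g_{\ep,h}$ with that for $g_\ep$ directly at the level of the Rayleigh quotients, exploiting that the two metrics differ only on the thin shell $\Omega_{1+\delta}\setminus\Omega_{1-\delta}$ inside $\Omega\co$, and that there they differ only by the bounded factor $\rho_{\ep,h}^2\in[\ep^2,1]$, which moreover converges pointwise to $1$ as $h\to 0$. First I would write out, in analogy with~\eqref{E.Rayep}, the Rayleigh quotient for $g_{\ep,h}$; the numerator and the boundary integral over $\pdVc$ acquire weights that are powers of $\rho_{\ep,h}$ (times $\ep$), and since $\rho_{\ep,h}^2$ agrees with $\ep^2$ on $\pdVc$ for all small~$h$ (that part of $\pd\cM$ lies outside the shell), the boundary term is in fact identical to the one in~\eqref{E.Rayep}. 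Hence only the bulk integral over $\Omega\co$ changes, and it changes by a factor that tends to $1$ pointwise and is bounded above and below uniformly in~$h$. By dominated convergence, for any fixed subspace $E$ the Rayleigh quotient for $g_{\ep,h}$ converges to that for $g_\ep$ as $h\to 0$; plugging in a maximizing/near-maximizing subspace and using the min-max characterization in both directions gives $\lim_{h\to 0}\si_k^{\ep,h}=\si_k^\ep$ for each fixed $k$.

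Next, since (by Proposition~\ref{P.ep}, after fixing $\ep<\ep_0$) the eigenvalue $\si_k^\ep$ is simple for $1\leq k\leq K$, the convergence of eigenvalues upgrades to convergence of the spectral projections: for $h$ small the eigenvalue $\si_k^{\ep,h}$ is also simple and a suitable choice of $L^2(\pd\cM)$-normalized eigenfunction $u_k^{\ep,h}$ converges to $u_k^\ep$ in $H^1(\cM)$. This is the standard perturbation argument for simple eigenvalues of a family of quadratic forms with uniformly equivalent norms; I would phrase it via the Courant--Fischer characterization together with the fact that an $H^1$-bounded sequence of eigenfunctions has a subsequence converging weakly, and then strongly once one knows the limiting eigenvalue, by testing the equation. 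The harmonicity (with respect to $g_\ep$) of the limit identifies it as $u_k^\ep$ up to sign, which we fix.

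Finally I would promote the $H^1(\cM)$-convergence to $C^1(\Om_{\frac12})$-convergence by an interior elliptic regularity argument, exactly as is done for Proposition~\ref{P.ep}: on the fixed open set $\dot\Omega_{\frac12}\Subset\dot\Omega$, where all the metrics $g_{\ep,h}$ coincide with the \emph{smooth, fixed} metric $g$ (since $\Om_{\frac12}$ is well inside $\Om$ and the modification lives in $\Omega\co$), the functions $u_k^{\ep,h}$ are $g$-harmonic, so $u_k^{\ep,h}-u_k^\ep$ is $g$-harmonic on $\dot\Omega$, and interior Schauder (or the Caccioppoli/De~Giorgi--Nash--Moser estimates followed by bootstrap) bounds its $C^1(\Om_{\frac12})$-norm by its $L^2(\Om)$-norm, which goes to $0$. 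Choosing $h$ small enough then yields the claimed bound $\|u_k^{\ep,h}-u_k^\ep\|_{C^1(\Om_{\frac12})}<\delta/2$ simultaneously for $1\leq k\leq K$. The only mildly delicate point is the passage from eigenvalue convergence to eigenfunction convergence for each of the $K$ branches at once while keeping simplicity, but since $\ep$ is held fixed and $K$ is finite this is routine; I do not expect any genuine obstacle here, as this proposition is the ``soft'' smoothing step and all the geometric content has already been extracted in Steps~1--3.
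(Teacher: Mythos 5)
Your strategy coincides with the paper's: the paper disposes of the spectral convergence by citing the approximation result of Colin de Verdi\`ere \cite[I.8]{CdV} (see also \cite{CoEl}), applicable because $\si_k^\ep$ is simple for $1\leq k\leq K$ by Propositions~\ref{P.sloshing} and~\ref{P.ep}, obtaining $\si_k^{\ep,h}\to\si_k^\ep$ and $H^1(\cM)$-convergence of suitably chosen eigenfunctions, and then upgrades to $C^1(\Om_{\frac12})$ by elliptic regularity away from $\pdH$; your min-max/dominated-convergence plus compactness sketch is essentially an in-line proof of that cited result. Two points, however, are not correct as written. The smaller one: the shell $\Om_{1+\delta}\setminus\Om_{1-\delta}$ \emph{does} meet $\pd\cM$, namely in $\Psi^{-1}(\{1-\delta<|t|\leq 1+\delta\}\times\pd\Si)$, so the boundary measure of $g_{\ep,h}$ is not identical to that of $g_\ep$: both the $\pdV$- and $\pdVc$-portions of $\pd\cM$ inside the shell carry an $h$-dependent weight. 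This is harmless (the weight lies between $\ep^{n-1}$ and $1$ and converges pointwise, so dominated convergence still applies), but note also that fixed test subspaces only yield $\limsup_h\si_k^{\ep,h}\leq\si_k^\ep$; the reverse inequality must come from the compactness/orthogonality argument you defer to your second paragraph, not from ``min-max in both directions''.

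The more substantive gap is in the final regularity step. The set $\Om_{\frac12}$ is \emph{not} compactly contained in the interior of $\cM$: it meets $\pd\cM$ along $\Psi^{-1}([-\tfrac12,\tfrac12]\times\pd\Si)$, and the $C^1(\Om_{\frac12})$-norm in the statement (which is what Corollary~\ref{C.Thom} requires, since $\Si$ has its boundary on $\pd\cM$ and the isotopy must be controlled up to $\pd\cM$) includes precisely this boundary portion. Purely interior Schauder or De~Giorgi--Nash--Moser estimates, as you invoke, cannot bound the $C^1$-norm there, so the step fails as literally written. The repair is standard but must be said: on $\Om_{1-\delta}$ all the metrics coincide with the fixed smooth metric $g$, so $u_k^{\ep,h}$ and $u_k^\ep$ are $g$-harmonic there and satisfy the Steklov condition $\pd_\nu u=\si u$ on $\pd\cM\cap\Om_{1-\delta}$ with eigenvalues that are uniformly bounded and convergent; one then needs elliptic estimates up to this boundary portion for the Robin-type condition (e.g.\ \cite[Chapter~9]{GT}) --- exactly the ``elliptic regularity away from $\pdH$'' invoked in the paper --- to convert $H^1$-convergence plus eigenvalue convergence into $C^1(\Om_{\frac12})$-convergence.
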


\begin{proof}
The result follows from standard results of approximation of eigenfunctions and eigenvalues of discontinuous metric, see e.g.,  \cite[I.8]{CdV} (see also \cite{CoEl}). In particular, we can apply \cite[I.8]{CdV} to the first $K$ nontrivial eigenfunctions $u_k^{\ep,h}$ when the  eigenvalues $\sigma_k^{\ep}$ are simple, which is guaranteed by Propositions \ref{P.sloshing} and \ref{P.ep} if $\ep,\beta$ are chosen sufficiently small. In this situation, from \cite[I.8]{CdV} we deduce that
$$
\lim_{h\to 0}\sigma_k^{\ep,h}=\sigma_k^{\ep}
$$
 and that there exists a sequence eigenfunctions $\{u_k^{\ep,h}\}$ associated with $\sigma_k^{\ep,h}$ such that
$$
\lim_{h\to 0}\|u_k^{\ep,h}-u_k^{\ep}\|_{H^1(\cM)}\to 0
$$
for all $k=1,...,K$. This in particular implies the claim of Proposition \ref{P.h} by standard elliptic regularity estimates away from $\pdH$ (see e.g.~\cite[Chapter~9]{GT}).
\end{proof}


\subsection*{Step 5: Conclusion of the proof.} Using the triangle inequality, we infer from Corollary~\ref{C.Thom} and Propositions~\ref{P.ep} and~\ref{P.h} that the smooth metric $g_{\ep,h}$ has the properties listed in Theorem~\ref{T.Steklov} for any small enough~$h>0$. This concludes the proof of Theorem \ref{T.Steklov}.

\section{Proof of Proposition~\ref{P.sloshing}}\label{technical}

The first part of the proposition is straightforward and follows from the characterization of the spectrum of the sloshing problem on a product manifold endowed with a product metric, see Appendix \ref{app_sloshing}. Namely, since the metric on $\Omega$ is the pull-back of a product metric, we can separate variables. We denote by $t$ the coordinate corresponding to $[-1,1]$ and by $y$ the coordinate on $\Sigma$. We obtain that the family of functions
$$
\left\{v_j^{\ell}(y)\cos\frac{\pi \ell (t+1)}{2}\circ\Psi\right\}_{j,\ell=0}^{\infty}
$$ 
are eigenfunctions of problem \eqref{E.sloshing} with corresponding eigenvalues $\mu_j^{\ell}(\beta)$, 
where $(v_j^{\ell},\mu_j^{\ell}(\beta))_{j=0}^ {\infty}$ are eigenpairs for the family of problems indexed by $\ell\in\mathbb N$
\begin{equation}\label{steklov.potential}
\begin{cases}
-\Delta v^\ell_j+\frac{\pi^2\ell^2}{4}v^\ell_j=0\,, & {\rm in\ }\dot \Sigma\,,\\
\partial_{\nu_\Si}v^\ell_j=\mu^{\ell}_j(\beta)v^\ell_j\,, & {\rm on\ }\partial\Sigma.
\end{cases}
\end{equation}
Recall that the metric on $\Sigma$ is $\beta^2g_{\Sigma}$, hence the Laplacian and the normal derivative depend on $\beta$. Here $\nu_\Si$ denotes the outer unit normal to $\partial\Sigma$, and $\mu^\ell_j(\beta)$ are of course ordered for fixed $\ell$ so that the sequence is nondecreasing. Then we re-order the eigenvalues of \eqref{E.sloshing.vj} and denote them by $\mu_k(\beta)$, and denote the corresponding eigenfunctions by $w_k$.  From Appendix \ref{app_sloshing} we also deduce that the functions $\{w_k\}_{k=0}^{\infty}$ are {\it all} the eigenfunctions of \eqref{E.sloshing}.

To prove the last statement of the proposition, we have to study the behavior of the eigenvalues and the eigenfunctions of \eqref{steklov.potential} as $\beta\to 0$. First, we note that when $\ell=0$ we have simply the Steklov problem on $\Sigma$ for the metric $\beta^2g_{\Sigma}$. The first eigenvalue is $0$, and the corresponding eigenfunction is constant. If $\ell\ne 0$, the first eigenvalue $\mu^{\ell}_0(\beta)$ is always strictly positive, it is simple, and a corresponding eigenfunction can be chosen positive in $\Sigma$ (this is a consequence of the maximum principle). Therefore with this choice we have $v^{\ell}_0(y)>0$ on $\Sigma$ for any $\ell\ne 0$. 

Writing the Rayleigh quotient for $\mu^{\ell}_j(\beta)$, we have 
\begin{equation}\label{R.eta}
\mu^{\ell}_j(\beta)=\inf_{E\subset\mathcal E_{j+1}(\Sigma)}\max_{0\ne v\in E}\frac{\int_{\Sigma}\beta^{-1}|\nabla v|_{g_{\Sigma}}^2+\frac{\pi^2\ell^2\beta}{4} v^2 dv_{g_{\Sigma}}}{\int_{\partial\Sigma}v^2 dv_{g_{\Sigma}}}\,,
\end{equation}
where the gradient is taken with respect to the {\it fixed} metric $g_{\Sigma}$ on $\Sigma$. Here $\mathcal E_{j+1}(\Sigma)$ denotes the set of $(j+1)$-dimensional subspaces of $H^1(\Sigma)$. We re-write \eqref{R.eta} as 
\begin{equation}\label{R.fix.metric}
\xi_j^{\ell}(\beta):=\beta\mu^{\ell}_j(\beta)=\inf_{E\subset\mathcal E_{j+1}(\Sigma)}\max_{0\ne v\in E}\frac{\int_{\Sigma}|\nabla v|_{g_{\Sigma}}^2+\frac{\pi^2\ell^2\beta^2}{4} v^2 dv_{g_{\Sigma}}}{\int_{\partial\Sigma}v^2 dv_{g_{\Sigma}}}\,.
\end{equation}
The right-hand side of \eqref{R.fix.metric} is the Rayleigh quotient of a Steklov problem with the potential $\frac{\pi^2\ell^2\beta^2}{4}$ on $\Sigma$ for a fixed metric $g_{\Sigma}$, whose eigenvalues are denoted by $\xi_j^{\ell}(\beta)$. 

As $\beta\to 0$, we conclude that this is a regular perturbation of the Steklov problem on $\Sigma$ for the fixed metric $g_{\Sigma}$. In particular we have that $\xi_j^{\ell}(\beta)\to \mu_j^0(1)$ as $\beta\to 0$ (recall that $\mu_j^0(1)$ are just the Steklov eigenvalues on $\Sigma$). Thus $\xi_j^{\ell}(\beta)\to 0$ if and only if $j=0$, which implies that 
$$
\mu_j^{\ell}(\beta)\to\infty\ \ \ \forall j\ne 0
$$
as $\beta\to 0$. 

On the other hand, using the constant function $v=1$ as test function for $\mu^{0}_j(\beta)$ in \eqref{R.eta}, we obtain
$$
\mu^{\ell}_0(\beta)\leq\frac{\pi^2\ell^2\beta|\Sigma|}{|\partial\Sigma|}.
$$
We have therefore proved that the sloshing eigenvalues \eqref{E.sloshing} have the following behavior as $\beta\to 0$:
$$
\mu_j^{\ell}(\beta)\to+\infty\,,\ \ \ {\rm if\ } j\geq 1
$$
and
$$
\mu_0^{\ell}(\beta)\to 0\,,\ \ \ {\rm if\ } j=0.
$$
This immediately implies the last part of the proposition, including the simplicity of the first $K$ eigenvalues (for any fixed $K\in\mathbb N$) if $\beta$ is chosen sufficiently small.

\section{Proof of Proposition~\ref{P.ep}}
\label{S.technical2}

The proof of this proposition uses ideas of \cite[Theorem 2.1]{JDG} (see also \cite[Proposition 2.2]{CoEl} and \cite[III]{CdV} for related problems). Let $\mu_k(\beta)$ be the sloshing eigenvalues (see Problem \ref{E.sloshing}), and let $w_k$ be the corresponding eigenfunctions normalized by $\int_{\pdV}w_k^2=1$. Therefore $\int_{\Omega}|\nabla w_k|^2=\mu_k(\beta)$. By hypothesis we have that the first $K$ sloshing eigenvalues are simple. To simplify the notation we shall write $\mu_k$, since $\beta$ is now fixed. 

Let $\psi_k\in H^1(\cM)$ be defined by
$$
\psi_k:=\begin{cases}
w_k\,, & {\rm in\ }\Omega\,,\\
\hat w_k\,, & {\rm in\ }\Omega\co ,
\end{cases}
$$
where the extension $\hat w_k$ solves
\begin{equation}\label{H.ext}
\begin{cases}
\Delta\hat w_k=0\,, & {\rm in\ }\Omega\co \,,\\
\hat w_k=w_k\,, & {\rm on\ }\pdH\,,\\
\partial_{\nu}\hat w_k=0\,, & {\rm on\ }\pdVc.
\end{cases}
\end{equation}
Note that, as the trace of $w_k$ on $\pdH$ belongs to $H^{1/2}(\pdH)$, problem \eqref{H.ext} admits a unique solution $\hat w_k\in H^1(\Omega_c)$. In fact, this is the unique solution of the variational problem of minimizing $\int_{\Omega\co }|\nabla \hat w|^2$ among all $\hat w\in H^1(\Omega_c)$ with trace equal to the trace of $w_k$ on $\pdH$ (see also Appendix \ref{estimate}). Here we are considering the metric $g$ on $\Omega\co $, and the Laplacian and the gradient are defined using this metric too.

Now, recall the following standard estimate:
\begin{equation}\label{Es.1}
\|\hat w_k\|^2_{H^1(\Omega\co )}\leq C\|w_k\|_{H^{1/2}(\pdH)}^2\leq C\|w_k\|_{H^1(\Omega)}^2\leq C(\mu_k+1)\,,
\end{equation}
where the constant may change from line to line as is customary (note that the constant may depend on $\beta$ which however we have fixed before perturbing the metric outside $\Omega$). For the reader's convenience, we have included a proof of \eqref{Es.1} in Appendix \ref{estimate}.
In particular, for the third inequality we have used the fact that $\|w\|_{H^1(\Omega)}^2$ and $\int_{\Omega}|\nabla w|^2+\int_{\pdV}w^2$ are two equivalent norms on $H^1(\Omega)$. We have  included a proof of this fact in Appendix \ref{estimate}. 

In order to prove the convergence of the eigenvalues $\sigma_k^\ep$ to $\mu_k$ we shall establish upper and lower bounds for $\sigma_k^\ep$.

We start with the upper bounds. Consider the subspace $V_k$ of $H^1(\cM)$ generated by the functions $\psi_1,...,\psi_k$ defined above. First, let us prove that this space has dimension $k$ for all $k\leq K$ and all small enough~$\ep$. To see this, let $\psi,\psi'\in V_k$. Using \eqref{Es.1} we get
\begin{align*}
\left|\int_{\partial\cM}\psi\psi'd\sigma_{\ep}-\int_{\pdV}\psi\psi'\right|&=\ep^{n-1}\left|\int_{\pdVc}\psi\psi'\right|\\
&\leq C\ep^{n-1}\|\psi\|_{H^1(\Omega\co )}\|\psi'\|_{H^1(\Omega\co )}\\
&\leq C\ep^{n-1}\|\psi\|_{H^1(\Omega\co )}\|\psi'\|_{H^1(\Omega\co )}\\
&\leq C\ep^{n-1}(\mu_k+1)\|\psi\|_{L^2(\pdV)}\|\psi'\|_{L^2(\pdV)}\\
&\leq C_k\ep^{n-1}\|\psi\|_{L^2(\pdV)}\|\psi'\|_{L^2(\pdV)}.
\end{align*}
Since $\int_{\pdV}\psi_i\psi_j=\int_{\pdV}w_iw_j=\delta_{ij}$, we deduce that for all $k\leq K$ and all small enough~$\ep$, $V_k$ has dimension $k$. Since $k\leq K$, we will henceforth write $C$ instead of~$C_k$, with the proviso that our estimates are not uniform in~$K$.

Taking $\psi=\psi'$, the above estimate yields 
\begin{equation}\label{est1}
\left|\int_{\partial\cM}\psi^2d\sigma_{\ep}-\|\psi\|^2_{L^2(\pdV)}\right|\leq C\ep^{n-1}\|\psi\|^2_{L^2(\pdV)}.
\end{equation}
A similar estimate, using \eqref{Es.1}, allows to prove that
\begin{equation}\label{est2}
\left|\int_{\cM}|\nabla \psi|_{\ep}^2dv_{\ep}-\int_{\Omega}|\nabla\psi|^2\right|\leq C_k\ep^{n-2}\|\psi\|_{L^2(\pdV)}^2.
\end{equation}
Using \eqref{est1} and \eqref{est2} in the min-max formulation \ref{E.Rayleigh} for the eigenvalues $\sigma_k^{\ep}$ and the fact that $V_{k+1}$ is a $k+1$-dimensional subspace of $H^1(\cM)$ we get
\begin{align}\label{U.bound}
\sigma_k^{\ep}&\leq\max_{0\ne\psi\in V_{k+1}}\frac{\int_{\cM}|\nabla \psi|_{\ep}^2dv_{\ep}}{\int_{\partial\cM}\psi^2d\sigma_{\ep}}\\
\notag &\leq\max_{0\ne\psi\in V_{k+1}}\frac{1}{1-C\ep^{n-1}}\frac{\int_{\Omega}|\nabla\psi|^2}{\int_{\pdV}\psi^2}+\frac{C\ep^{n-2}}{1-C\ep^{n-1}}\\
&\leq \max_{0\ne w\in W_{k+1}}(1+C\ep^{n-1})\frac{\int_{\Omega}|\nabla w|^2}{\int_{\pdV} w^2}+C\ep^{n-2}\notag\\
&=(1+C\ep^{n-1})\mu_k+C\ep^{n-2},\notag
\end{align}
where $W_{k+1}$ is the subspace of $H^1(\Omega)$ spanned by $w_1,...,w_{k+1}$.

We establish now a lower bound for $\sigma_k^{\ep}$. To do so, let us start with the claim that
\begin{equation}\label{E.claim}
H^1(\cM)=\cH_1\oplus\cH_2
\end{equation}
where
$$
\cH_1:=\{\psi\in H^1(\cM):\Delta\psi\equiv 0{\rm\ in\ }\Omega\co \}
$$
and
$$
\cH_2:=\{\psi\in H^1(\cM):\psi\in H^1_{0,{\rm L}}(\Omega\co )\,,\psi\equiv 0{\rm\ in\ }\Omega\},
$$
where $H^1_{0,{\rm L}}(\Omega\co )$ is the closure in  $H^1(\Omega\co )$ of the subspace of $C^{\infty}(\Omega_c)$ of functions vanishing in a neighborhood of $\pdH$. To see this, given $\psi\in H^1(\cM)$, we let $\psi_1$ be the only function such that $\psi_1=\psi$ in $\Omega$ and
\begin{equation}\label{psi1}
\begin{cases}
\Delta\psi_1=0\,,&{\rm in\ }\Omega\co \,,\\
\psi_1=\psi\,, & {\rm on\ }\pdH\,,\\
\partial_{\nu}\psi_1=0\,, & {\rm on\ }\pdVc.
\end{cases}
\end{equation}
If we now set $\psi_2:=\psi-\psi_1$, it is easy to check that $\psi_1\in \cH_1$ and $\psi_2\in\cH_2$. 
From the definition of $\cH_1$ and $\cH_2$ it also follows that
\begin{equation}\label{orthogonality}
\int_{\cM}\langle\nabla\psi,\nabla\psi'\rangle=0
\end{equation}
for all $\psi\in\cH_1$ and $\psi'\in \cH_2$. In \eqref{orthogonality} the gradient, scalar product and volume element are defined by the metric $g$. The claim~\eqref{E.claim} then follows.

Now, given any $\psi\in H^1(\cM)$, write
$$
\psi=\psi_1+\psi_2
$$
where $\psi_1\in\cH_1$ and $\psi_2\in \cH_2$. From the upper bound \eqref{U.bound} we get that for $\ep$ sufficiently small (depending on $K$)
$$
\sigma_k^{\ep}\leq\mu_k+1.
$$
Therefore we can write
$$
\sigma_k^{\ep}=\inf_{V\in\cV_{k+1}}\max_{0\ne\psi\in \cV_{k+1}}\frac{\int_{\cM}|\nabla_\ep \psi|_{\ep}^2dv_{\ep}}{\int_{\partial\cM}\psi^2d\sigma_{\ep}}
$$
where $\cV_{k+1}$ is a $k+1$ dimensional subspace of $H^1(\cM)$ such that 
$$
\frac{\int_{\cM}|\nabla_\ep \psi|_{\ep}^2dv_{\ep}}{\int_{\partial\cM}\psi^2d\sigma_{\ep}}\leq\mu_k+1
$$
for all $\psi\in\cV_{k+1}$. 

Note that if $\psi=\psi_1+\psi_2$ as above and
$$
\int_{\partial\cM}\psi_2^2d\sigma_{\ep}\geq c \int_{\partial\cM}\psi^2 d\sigma_{\ep}
$$
then
\begin{equation}\label{contr}
\frac{\int_{\cM}|\nabla_\ep \psi|_{\ep}^2dv_{\ep}}{\int_{\partial\cM}\psi^2d\sigma_{\ep}}\geq\frac{c}{\ep}\frac{\int_{\Omega\co }|\nabla\psi_2|^2}{\int_{\pdVc}\psi_2^2}\geq\frac{\lambda_1 c}{\ep}
\end{equation}
where $\lambda_1$ is the first Steklov--Dirichlet eigenvalue on $\Omega\co $, which is positive. Namely, $\lambda_1$ is the first eigenvalue of $\Delta u=0$ in $\Omega\co $, $u|_{\pdH}=0$, $\partial_{\nu}u=\lambda u$ on $\pdVc$. Therefore, if a function $\psi$ belongs to some subspace $\cV_{k+1}$,
\begin{equation}\label{Es.2}
\int_{\partial\cM}\psi_2^2d\sigma_{\ep}\leq C\ep \int_{\partial\cM}\psi^2 d\sigma_{\ep},
\end{equation}
since otherwise we would have a contradiction with \eqref{contr}.

The last ingredient in order to prove a lower bound for $\sigma_k^{\ep}$ is the following estimate:
\begin{equation}\label{Es.3}
\|\psi\|_{H^1(\Omega\co )}^2\leq C\|\psi_1\|^2_{L^2(\pdV)}.
\end{equation}
This is obtained exactly as \eqref{Es.1}, with $\psi_1$ defined by \eqref{psi1}.

Using \eqref{orthogonality}, \eqref{Es.2} and \eqref{Es.3}, we obtain
\begin{align*}
\frac{\int_{\cM}|\nabla_\ep \psi|_{\ep}^2dv_{\ep}}{\int_{\partial\cM}\psi^2d\sigma_{\ep}}&=\frac{\int_{\cM}|\nabla_\ep\psi_1|_{\ep}^2dv_{\ep}+\int_{\cM}|\nabla_\ep\psi_2|_{\ep}^2dv_{\ep}}{\int_{\partial\cM}\psi^2d\sigma_{\ep}}\\
&\geq\frac{\int_{\cM}|\nabla_\ep\psi_1|_{\ep}^2dv_{\ep}}{(1+C\ep^{1/2})\int_{\partial\cM}\psi_1^2d\sigma_{\ep}}\\
&=\frac{\int_{\Omega}|\nabla\psi_1|^2+\ep^{n-2}\int_{\Omega\co }|\nabla\psi_1|^2}{(1+C\ep^{1/2})\left(\int_{\pdV}\psi_1^2+\ep^{n-1}\int_{\pdVc}\psi_1^2\right)}\\
&\geq(1-C\ep^{1/2})\frac{\int_{\Omega}|\nabla\psi_1|^2}{\int_{\pdV}\psi_1^2}\,.
\end{align*}
From the min-max principle \eqref{E.Rayleigh}, we then infer 
\begin{equation}\label{L.bound}
\sigma_k^{\ep}\geq(1-C\ep^{1/2})\mu_k.
\end{equation}

Combining \eqref{U.bound} and \eqref{L.bound} we then get
\begin{equation}\label{e.convergence}
\lim_{\ep\to 0}\sigma_k^{\ep}=\mu_k.
\end{equation}
Now, since for $k\leq K$ all eigenvalues $\mu_k$ are simple by Proposition \ref{P.sloshing}, we have that for $\ep$ close to zero $\sigma_k^{\ep}$ are simple for $k\leq K$, and moreover that we can choose suitably normalized eigenfunctions $u_k^{\ep}$ such that
$$
\lim_{\ep\to 0}\|u_k^{\ep}-w_k\|_{H^1(\Omega)}=0,
$$
see e.g., \cite[III.1]{CdV}. Elliptic regularity implies that the convergence is also in $C^1(\Omega_{\frac12})$ (or in $C^r(\Om_\tau)$ for any fixed $r$ and any fixed $\tau<1$).  The proposition is then proven.

\appendix

\section{Sloshing eigenvalues and eigenfunctions on a product manifold}\label{app_sloshing}

The purpose of this Appendix is to describe the spectrum of the sloshing problem on a product of two Riemannian manifold endowed with the product metric.

Let $\Sigma$ be a compact, smooth, $(n-1)$-dimensional manifold with boundary $\partial\Sigma$, endowed with a Riemannian metric $g$. We consider the product manifold $\mathcal N=[-1,1]\times\Sigma$ endowed with the product metric $dt^2+g$. Hence $\mathcal N$ is a compact, smooth, $n$-dimensional Riemannian manifold with boundary $([-1,1]\times\partial\Sigma)\cup(\{-1,1\}\times\Sigma)$. We consider the sloshing problem on $\mathcal N$:
\begin{equation}\label{sloshingN}
\begin{cases}
\Delta f_k=0 & \text{ in   } \dot{\mathcal{N}}\,,\\
\pd_{\nu}f_k=0 & \text{ on }  \{-1,1\}\times\dot\Sigma\,,\\
\pd_{\nu}f_k=\mu_k f_k & \text{ on } (-1,1)\times\pd\Sigma.
\end{cases}
\end{equation}

As customary, we separate variables $(t,y)\in[-1,1]\times\Sigma$ and look for eigenfunctions $f$ of the form $f(t,y)=g(t)v(y)$. It is immediate to check that the functions $f_j^{\ell}(t,y):=\cos\frac{\pi\ell(t+1)}{2}v_j^{\ell}(y)$, $j,\ell\in\mathbb N$ are eigenfunctions of $\eqref{sloshingN}$, where, for any fixed $\ell\in\mathbb N$, $v_j^{\ell}$ are the eigenfunctions of
$$
\begin{cases}
-\Delta v_j^{\ell}+\frac{\pi^2\ell^2}{4}v_j^{\ell}=0 &  {\rm in\ }\dot\Sigma\,,\\
\partial_{\nu_{\Sigma}} v_j^{\ell}=\mu_j^{\ell}v_j^{\ell} & {\rm on\ }\partial\Sigma.
\end{cases}
$$
Here $\nu_{\Sigma}$ denotes the outer unit normal to $\partial\Sigma$. The eigenvalue corresponding to $f_j^{\ell}$ is then $\mu_j^{\ell}$. 

We show  now that $\{\cos\frac{\pi\ell(t+1)}{2}v_j^{\ell}(y)\}_{j,\ell=0}^{\infty}$ are {\it all} the eigenfunctions of \eqref{sloshingN}, and thus that $\{\mu_j^{\ell}\}_{j,\ell=0}^{\infty}$ exhaust {\it all} the spectrum.

It is well known that for any fixed $\ell\in\mathbb N$ the eigenfunctions can be chosen so that the boundary traces $\{v_j^{\ell}|_{\partial\Sigma}\}_{j=0}^{\infty}$ are an orthonormal basis of $L^2(\partial\Sigma)$. On the other hand, $\{\cos\frac{\pi\ell(t+1)}{2}\}_{\ell=0}^{\infty}$ is obviously an orthonormal basis of $L^2((-1,1))$.

We now claim that $\{\cos\frac{\pi\ell(t+1)}{2}v_j^{\ell}|_{\partial\Sigma}\}_{j=0}^{\infty}$ is an orthogonal basis of $L^2((-1,1)\times\partial\Sigma)$. To see this, suppose that $f\in L^2(\mathcal N)$ is such that
$$
\int_{-1}^1\int_{\pd\Sigma} f(t,y)\cos\frac{\pi\ell(t+1)}{2}v_j^{\ell}(y)dtd\sigma_g(y)=0
$$
for all $\ell,j\in\mathbb N$. Here $d\sigma_g$ denotes the $(n-1)$-dimensional volume element of $\partial\Sigma$. By Fubini's Theorem
$$
\int_{\pd\Sigma}v_j^{\ell}(y)\left(\int_{-1}^1 f(t,y)\cos\frac{\pi\ell(t+1)}{2}dt\right)d\sigma_g(y)=0\,,\ \ \ \forall\ell,j\in\mathbb N.
$$
Since for all fixed $\ell\in\mathbb N$,  $\{v_j^{\ell}|_{\pd\Sigma}\}_{j=0}^{\infty}$ is a complete system and since it is easily seen by H\"older's inequality that $\int_{-1}^1 f(t,y)\cos\frac{\pi\ell(t+1)}{2}dt\in L^2(\pd\Sigma)$, we deduce that for any $\ell\in\mathbb N$
\begin{equation}\label{B0}
\int_{-1}^1 f(t,y)\cos\frac{\pi\ell(t+1)}{2}dt=0
\end{equation}
for almost every $y\in\pd\Sigma$. Since $\mathbb N$ is countable, \eqref{B0} holds for all $\ell\in\mathbb N$ almost everywhere in $\pd\Sigma$. Thus, since $\{\cos\frac{\pi\ell(t+1)}{2}\}_{\ell=0}^{\infty}$ is a complete system in $L^2((-1,1))$ we have that $f(t,y)=0$ almost everywhere in $(-1,1)\times\pd\Sigma$. This proves the claim.

Now, since we know that the traces of the eigenfunctions of \eqref{sloshingN} form a complete system in $(-1,1)\times\pd\Sigma$, we conclude that there are no other eigenfunctions than  $\{\cos\frac{\pi\ell(t+1)}{2}v_j^{\ell}(y)\}_{j,\ell=0}^{\infty}$.

\section{Estimates for solutions to a mixed problem}\label{estimate}

The aim of this section is to prove the chain of inequalities \eqref{Es.1}.

Let $(\cM,g)$ be a smooth $n$-dimensional Riemannian manifold (with or without boundary) and let $\Omega\subset \cM$ be a piecewise smooth, Lipschitz domain. Let $\Gamma\subset\partial \cM$ be an open, smooth, subset of $\partial\Omega$.

We start by proving that the standard norm $\|u\|_{H^1(\cM)}^2:=\int_{\cM}|\nabla u|^2+u^2$ of $H^1(\cM)$ is equivalent to the norm 
\begin{equation}\label{equiv_norm}
\|u\|_{H^1(\cM),\Gamma}^2:=\int_{\cM}|\nabla u|^2+\int_{\Gamma}u^2.
\end{equation}
We claim that there exists constants $c,C>0$ not depending on $u$ such that
$$
c\|u\|_{H^1(\cM)}^2\leq\|u\|_{H^1(\cM),\Gamma}^2\leq C\|u\|_{H^1(\cM)}^2
$$
We start from the first inequality. Assume by contradiction that it is not true. Then there exists a sequence $\{u_k\}\subset H^1(\cM)$ such that
$$
\|u_k\|_{H^1(\cM)}^2>k\|u_k\|_{H^1(\cM),\Gamma}^2.
$$
Without loss of generality we may assume $\|u_k\|_{H^1(\cM)}=1$ for all $k$. Hence we have that
$$
\|u_k\|_{H^1(\cM),\Gamma}^2<\frac{1}{k}.
$$
This implies that $\|\nabla u_k\|_{L^2(\cM)}\to 0$ and $\|u_k\|_{L^2(\Gamma)}\to 0$. Since $\{u_k\}$ is bounded in $H^1(\cM)$, up to subsequence, $u_k\rightharpoonup u$ in $H^1(\cM)$ and $u_k\to u$ in $L^2(\cM)$ and also in $L^2(\Gamma)$. Therefore $u\equiv 0$ on $\Gamma$ and $u\equiv{\rm const}$ in $\cM$. Hence $u\equiv 0$ in $\cM$. On the other hand, we have $\|u\|_{L^2(\cM)}=1$, which gives a contradiction. The second inequality is proved in the same way.

 Let now $\Gamma_1,\Gamma_2$ be two non-empty smooth open subsets of $\partial\Omega$, with $\overline{\Gamma}_1\cup\overline{\Gamma}_2=\partial\Omega$, $\Gamma_1\cap\Gamma_2=\emptyset$, $\overline{\Gamma}_1\cap\overline{\Gamma}_2$ is a smooth $(n-2)$-dimensional submanifold of $\partial\Omega$. Let $g\in H^{1/2}(\Gamma_1)$.
We consider problem
\begin{equation}\label{mixed}
\begin{cases}
\Delta u=0 & {\rm in\ }\Omega\,,\\
u=g & {\rm on\ }\Gamma_1\,,\\
\partial_{\nu}u=0 & {\rm on\ }\Gamma_2,
\end{cases}
\end{equation}

Problem \eqref{mixed} is the classical formulation of the following problem: find $u\in H^1(\Omega)$ such that
$$
\int_{\Omega}|\nabla u|^2=\min_{v\in V_g}\int_{\Omega}|\nabla v|^2,
$$
where
$$
V_g:=\{v\in H^1(\Omega):{\rm Tr}\,v=g{\rm\ on\ }\Gamma_1\}.
$$
Here ${\rm Tr}$ is the trace operator, and since $\Gamma_1$ is smooth and $g\in H^{1/2}(\Gamma_1)$, the set $V_g$ is not empty. Hence the minimization problem has a unique solution. The unique minimizer $u$ solves
$$
\int_{\Omega}\langle\nabla u,\nabla\phi\rangle=0
$$
for all $\phi\in C^{\infty}(\Omega)$ which vanish on a neighborhood of $\Gamma_1$, and ${\rm Tr}\,u=g$ on $\Gamma_1$.

We know that the trace operator ${\rm Tr}:H^1(\Omega)\to\Gamma_1$ is bounded and is surjective onto $H^{1/2}(\Gamma_1)$. On the other hand we have shown that it is bijective from the subspace 
$$
\left\{u\in H^1(\Omega):\int_{\Omega}\langle\nabla u,\nabla\phi\rangle=0 \,,\ \forall\phi\in C^{\infty}(\Omega)\,,\ \phi\equiv 0{\rm\ on\ a\ neighborhood\ of\ }\Gamma_1\right\}
$$
of $H^1(\Omega)$ to $H^{1/2}(\Gamma_1)$. Therefore its inverse is continuous on $H^{1/2}(\Gamma_1)$. We conclude that there exists $C>0$ independent on $g\in H^{1/2}(\Gamma_1)$ such that
\begin{equation}\label{inverse_bound}
\|u\|_{H^1(\Omega)}\leq C\|g\|_{H^{1/2}(\Gamma_1)}.
\end{equation}

Inequality \eqref{Es.1} then follows by first applying \eqref{inverse_bound} with $\Omega$ replaced by $\Omega\co$ and with $\Gamma_1=\pdH$; then using the continuity of the trace operator from $H^1(\Omega)$ to $H^{1/2}(\pdH)$, and then using the equivalence of the $H^1(\Omega)$ norm with the norm~\eqref{equiv_norm} with $\Gamma:=\pdV$.

\section*{Acknowledgements}

The authors are grateful to Alexandre Girouard and Iosif Polterovich, who have read a preliminary version of the manuscript, providing very valuable comments.

This work has received funding from the European Research Council (ERC) under the European Union's Horizon 2020 research and innovation programme through the grant agreement~862342 (A.E.). The first author is also partially supported by the grants CEX2019-000904-S, RED2022-134301-T and PID2022-136795NB-I00 funded by MCIN/AEI. The second author acknowledges support of 
INDAM-GNAMPA project ``Problemi di doppia curvatura su variet\`a  a bordo e legami con le EDP di tipo ellittico'' and of the project 
``Pattern formation in nonlinear phenomena'' funded by the MUR Progetti di Ricerca di Rilevante Interesse Nazionale (PRIN) Bando 2022 grant 
20227HX33Z. The third author acknowledges support of the INDAM-GNSAGA project ``Analisi Geometrica: Equazioni alle Derivate Parziali e Teoria delle Sottovariet\`a'' and of the the project ``Perturbation
problems and asymptotics for elliptic differential equations: variational and potential theoretic methods'' funded by the MUR Progetti di Ricerca di Rilevante Interesse Nazionale (PRIN) Bando 2022 grant 2022SENJZ3.

\bibliography{bibliography}{}
\bibliographystyle{abbrv}












\end{document}